\newenvironment{note}[1][Note]
{\bigskip\begin{center}\begin{boxedminipage}{4.5in}\setlength{\parindent}{1em}\noindent\textbf{#1. }}
{\end{boxedminipage}\end{center}\bigskip}
\newenvironment{todo}{\begin{note}[Todo]}{\end{note}}
\begin{document}

\newtheorem{thm}{Theorem}[section]
\newtheorem*{thm*}{Theorem}
\newtheorem{defn}[thm]{Definition}
\newtheorem*{defn*}{Definition}
\newtheorem{propn}[thm]{Proposition}
\newtheorem*{propn*}{Proposition}
\newtheorem{lemma}[thm]{Lemma}
\newtheorem*{lemma*}{Lemma}
\newtheorem{cor}[thm]{Corollary}
\newtheorem*{cor*}{Corollary}
\newtheorem{eg}[thm]{Example}
\newtheorem*{eg*}{Example}
\newtheorem{claim}[thm]{Claim}
\newtheorem{remark}[thm]{Remark}

\definecolor{redorange}{rgb}{0.800,0.200,0.000}

\title{Primitive orthogonal idempotents for R-trivial monoids}
\author[1,2,3]{Chris Berg}
\author[1,2] {Nantel Bergeron}
\author[1,2] {Sandeep Bhargava}
\author[1,2,3] {Franco Saliola}
\affil[1]{Fields Institute\\ 222 College Street\\ Toronto, ON, Canada}
\affil[2]{York University\\4700 Keele Street\\ Toronto, ON, Canada}
\affil[3]{Universit\'e du Qu\'ebec \`a Montr\'eal, Montr\'eal, QC, Canada}
\maketitle

\newcommand{\bbC}{\mathbb{C}}
\newcommand{\bbF}{\mathbb{F}}
\newcommand{\bbQ}{\mathbb{Q}}
\newcommand{\bbR}{\mathbb{R}}
\newcommand{\bbZ}{\mathbb{Z}}

\newcommand{\calA}{\mathcal{A}}
\newcommand{\calB}{\mathcal{B}}
\newcommand{\calC}{\mathcal{C}}
\newcommand{\calE}{\mathcal{E}}
\newcommand{\calF}{\mathcal{F}}
\newcommand{\calG}{\mathcal{G}}
\newcommand{\calH}{\mathcal{H}}
\newcommand{\calL}{\mathcal{L}}
\newcommand{\calM}{\mathcal{M}}
\newcommand{\calP}{\mathcal{P}}
\newcommand{\calS}{\mathcal{S}}
\newcommand{\calU}{\mathcal{U}}
\newcommand{\calV}{\mathcal{V}}
\newcommand{\calW}{\mathcal{W}}
\newcommand{\calX}{\mathcal{X}}
\newcommand{\calY}{\mathcal{Y}}
\newcommand{\calZ}{\mathcal{Z}}
\newcommand{\frake}{\mathfrak{e}}
\newcommand{\frakg}{\mathfrak{g}}
\newcommand{\mfa}{\mathfrak{a}}
\newcommand{\mfb}{\mathfrak{b}}
\newcommand{\mfc}{\mathfrak{c}}
\newcommand{\mfg}{\mathfrak{g}}
\newcommand{\mfh}{\mathfrak{h}}
\newcommand{\mfi}{\mathfrak{i}}
\newcommand{\mfr}{\mathfrak{r}}
\newcommand{\mfz}{\mathfrak{z}}
\newcommand{\mfA}{\mathfrak{A}}
\newcommand{\mfB}{\mathfrak{B}}

\begin{abstract}
We construct a recursive formula for a complete system of primitive orthogonal
idempotents for any $R$-trivial monoid. This uses the newly proved equivalence
between the notions of $R$-trivial monoid and weakly ordered monoid.
\end{abstract}

\section{Introduction}

Recently, Denton \cite{D} gave a formula for a complete system of primitive orthogonal idempotents for the \emph{$0$-Hecke algebra} of type $A$, the first since the question was raised by Norton \cite{N} in 1979.  A complete system of primitive orthogonal idempotents for \emph{left regular bands} was found by Brown \cite{B} and Saliola \cite{Sa}.  Finding such collections is an important problem in representation theory because they decompose an algebra into projective indecomposable modules:  if $\{e_J \}_{J \in \mathfrak{I}}$ is such a collection for a finite dimensional algebra $A$, then $A = \oplus_{J \in \mathfrak{I}} A e_J$, where each $A e_J$ is a projective indecomposable module. They also allow for the explicit computation of the quiver, the Cartan invariants, and the Wedderburn decomposition of the algebra (see \cite{Bremner, Benson}). For example, in \cite{DHST}, Denton, Hivert, Schilling, and Thi\'ery use a construction of a system of primitive orthogonal idempotents for any $J$-trivial monoid $S$ to derive combinatorially the Cartan matrix and quiver of $S$.

Schocker \cite{S} constructed a class of monoids, called \emph{weakly ordered monoids}, to generalize simultaneously $0$-Hecke monoids and left regular bands, with the broader aim of finding a complete system of orthogonal idempotents for the corresponding monoid algebras.
We achieve this goal here.

A key step is to recognize that the notions of weakly
ordered monoid and \emph{$R$-trivial monoid} are one and the same. This was
first pointed out to us by Thi\'ery \cite{NT} after an intense
discussion between the authors and Denton, Hivert, Schilling, and Thi\'ery. In
Section \ref{rtriv}, we fill out an outline of a proof provided by Steinberg
\cite{St}, who independently made this same observation. In Section
\ref{idempotents}, we use this equivalence to build a recursive formula for a complete system of
primitive orthogonal idempotents for any $R$-trivial monoid. This covers, in particular but not only, the previously known cases of $J$-trivial monoids \cite{DHST} and left regular bands.

\section{Weakly ordered monoids and $R$-trivial monoids}\label{rtriv}

Given any monoid $S$, that is, a set with an associative multiplication and an
identity element, we define a preorder $\leq$ as follows. Given $u,v \in S$,
write $u \leq v$ if there exists $w \in S$ such that $uw=v$. We write $u < v$
if $u \leq v$ but $u \neq v$. Unless stated otherwise, the monoids throughout
the paper are endowed with this ``weak'' preorder. In the monoid theory
literature, the \emph{dual} of this preorder is known as \emph{Green's
$R$-preorder}.

\begin{defn}\label{defn:WOS}
    A finite monoid $S$ is said to be a {\bf weakly ordered monoid} if there is
    a finite upper semi-lattice $(\calL, \preceq)$ together with two maps $C,D:
    S \to \calL$ satisfying the following axioms:
	\begin{enumerate}
		\item
			$C$ is a monoid morphism, i.e. $C(uv) = C(u) \vee C(v)$ for all $u,v \in S$.
		\item
			$C$ is a surjection.
		\item
			If $u,v \in S$ are such that $uv \leq u$, then $C(v) \preceq D(u)$.
		\item
			If $u,v \in S$ are such that $C(v) \preceq D(u)$, then $uv = u$.
	\end{enumerate}
\end{defn}

\begin{remark}\emph{
This notion was introduced by Schocker \cite{S} to generalize $0$-Hecke monoids
and left regular bands, with the broader aim of finding a complete system of
orthogonal idempotents for the corresponding monoid algebras. In his paper, he
actually calls these \textit{weakly ordered semigroups}. However our
understanding is that monoids include an identity element and semigroups do not. So
throughout the paper we call these weakly ordered monoids.
} \end{remark}

\begin{defn}\label{defn:rtrivial}
	A monoid $S$ is {\bf $\boldsymbol{R}$-trivial} if, for all $x,y \in S$, $xS = yS$ implies $ x=y$.
\end{defn}

We restrict our discussion to \emph{finite} $R$-trivial monoids.  


\begin{eg}\label{example:LRB}\emph{
A monoid $S$ is called a {\bf left regular band} if $x^2 = x$ and $xyx = xy$ for all $x,y \in S$.
Left regular bands are $R$-trivial.  Indeed, if $xS = yS$, then there exist $u,v
\in S$ such that $xu=y$ and $x=yv$. But then, since $uv = uvu$, 
\begin{displaymath}
x = yv = xuv = xuvu = yvu = xu = y.
\end{displaymath}
Finitely generated left regular bands are also weakly ordered monoids, see Shocker \cite{S}, e.g. 2.4 and Brown \cite[Appendix B]{B}.
}\end{eg}

\begin{eg}\label{example:0hecke}\emph{
Let $G$ be a Coxeter group with simple generators $\{s_i : i \in I \}$ and relations:
\begin{itemize}
\item $s_i^2 = 1$,
\item $\underbrace{s_is_j s_i s_j \cdots}_{m_{ij}}= \underbrace{s_js_i s_j s_i \cdots}_{m_{ij}}$ for some positive integers $m_{ij}$.
\end{itemize}
 Then the \textbf{0-Hecke monoid} $H^G(0)$ has generators $\{T_i : i\in I\}$ and relations:
\begin{itemize}
\item $T_i^2 = T_i$,
\item $\underbrace{T_iT_j T_i T_j \cdots}_{m_{ij}}= \underbrace{T_jT_i T_j T_i \cdots}_{m_{ij}}$  for some positive integers $m_{ij}$.
\end{itemize}
The weakly ordered monoid $H^{G}(0)$ has maps $C$ and $D$ onto the lattice of subsets of $I$. The map $C$ is the \textit{content} of an element: $C(T_{i_1}T_{i_2}\cdots T_{i_k}) = \{i_1, i_2, \dots, i_k\}$. The map $D$ is the set of right descents of an element: $D(x) = \{ i \in I : xT_i=x\}$.
Note that the preorder for this monoid coincides with the weak order on the elements of the Coxeter group $G$.
\vskip2pt
Of particular interest is the case when $G$ is the symmetric group $\mathfrak{S}_n$. Norton \cite{N} gave a decomposition of the monoid algebra $\mathbb{C}H^{\mathfrak{S}_n}(0)$ into left ideals and classified its irreducible representations. She raised the question of constructing a complete system of orthogonal idempotents for the algebra, which was first answered by Denton \cite{D}.
}\end{eg}

\begin{eg}\emph{
    Let $S$ be the monoid with identity generated by the following matrices:
    \begin{displaymath}
    g_1 := \left[ \begin{array}{rrr} 1 & 0 & 0 \\ 0 & 0 & 1 \\ 0 & 0 & 1 \end{array} \right]
    \quad\text{and}\quad
    g_2 := \left[ \begin{array}{rrr} 0 & 1 & 0 \\ 0 & 1 & 0 \\ 0 & 0 & 1 \end{array} \right].
    \end{displaymath}
    Then $S = \{ 1,\  g_1,\  g_2,\  g_1 g_2,\  g_2 g_1\}$ and $S$ is both an
    $R$-trivial monoid and a weakly ordered monoid.  For example, we can take
    $\calL$ to be usual lattice of subsets of $\{1,2\}$, with $C: S \to
    \calL$ given by
		\begin{displaymath}
			C(1) = \emptyset, \ C(g_1) = \{1\}, \ C(g_2) = \{2\}, \ C(g_1 g_2) = C(g_2 g_1) = \{1,2\},
		\end{displaymath}
and $D: S \to \calL$ given by		
		\begin{displaymath}
			D(1) = \emptyset, \ D(g_1) = \{1\}, \ D(g_2) = D(g_1 g_2) = \{2\}, \ D(g_2 g_1) = \{1,2\}.
		\end{displaymath}
The monoid $S$, however, is neither a left regular band, since $g_1 g_2$ is not idempotent, nor isomorphic to the $0$-Hecke monoid $H^{G}(0)$ on two generators, since the latter always has an even number of elements.
}\end{eg}

The fact that the above examples are all weakly ordered and $R$-trivial is no
coincidence:  the purpose of this section is to show that these two notions are
equivalent.

\begin{remark}\label{propn:Rtrivial iff preorder=partialorder}
 A monoid $S$ is $R$-trivial if and only if the preorder $\leq$ defined above is a partial order.
\end{remark}
\begin{proof}
	Suppose $S$ is an $R$-trivial monoid and $x,y \in S$ are such that $x \leq y$ and $y \leq x$.  Then there exist $u,v \in S$ such that $xu=y$ and $yv=x$.  So $y \in xS$ and $x \in yS$, implying that $yS \subseteq xS$ and $xS \subseteq yS$.  That is, $xS = yS$.  Since $S$ is $R$-trivial, $x = y$.

	On the other hand, suppose that the given preorder is a partial order, and that $xS = yS$ for some $x,y \in S$.  Since $x = x \cdot 1 \in xS = yS$, we have that $x = yu$ for some $u \in S$.  So $y \leq x$.  Similarly, $y \in xS$ implies that $x \leq y$.  The antisymmetry of $\leq$ implies then that $x=y$.  So $S$ is $R$-trivial.
\end{proof}

\begin{cor}\label{cor:WOS is RtrivialMonoid}
	A weakly ordered monoid is an $R$-trivial monoid.
\end{cor}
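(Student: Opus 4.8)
The plan is to reduce the statement to the characterization established immediately above: by the preceding remark, $S$ is $R$-trivial precisely when the weak preorder $\leq$ is a partial order. Since $\leq$ is automatically reflexive (as $u \cdot 1 = u$, so $u \leq u$) and transitive (if $uw = v$ and $vw' = t$, then $u(ww') = t$), the only property left to verify is antisymmetry. So I would assume $u \leq v$ and $v \leq u$ for some $u,v \in S$ and aim to deduce $u = v$, which by the remark forces $S$ to be $R$-trivial.

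The key observation is that Axioms 3 and 4 are exactly what is needed to collapse a two-sided comparison into an equality. From $v \leq u$ I obtain a witness $w \in S$ with $vw = u$. Now I feed in the other inequality: since $vw = u$ and $u \leq v$, the product $vw$ satisfies $vw \leq v$. This is precisely the hypothesis of Axiom 3 applied to the pair $(v,w)$ (matching the pattern $xy \leq x$), and it yields $C(w) \preceq D(v)$.

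At this point Axiom 4, applied to the same pair $(v,w)$, gives $vw = v$. But $vw = u$ by the choice of $w$, so $u = v$. This establishes antisymmetry of $\leq$, and hence, via the remark characterizing $R$-triviality as $\leq$ being a partial order, shows that $S$ is $R$-trivial.

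I expect the only real subtlety — rather than a genuine obstacle — to be bookkeeping about the order of multiplication: one must arrange the witness $w$ to sit on the \emph{right}, matching the one-sided nature of both the preorder and of Axioms 3 and 4, and one must remember to use one of the two comparisons to produce the witness $w$ while using the \emph{other} comparison to verify the inequality $vw \leq v$ that triggers Axiom 3. It is worth noting that the monoid-morphism and surjectivity conditions (Axioms 1 and 2) are not needed for this corollary; only the two axioms tying $\leq$ to $C$ and $D$ come into play.
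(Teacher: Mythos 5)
Your proof is correct, and it follows the same overall skeleton as the paper: both arguments reduce the corollary to the observation (Remark~\ref{propn:Rtrivial iff preorder=partialorder}) that $S$ is $R$-trivial if and only if the weak preorder $\leq$ is a partial order. The difference lies in how antisymmetry is established. The paper simply cites Schocker's Lemma~2.1 for the fact that the axioms of a weakly ordered monoid force $\leq$ to be a partial order, whereas you prove this step directly: from $u \leq v$ and $v \leq u$ you take a witness $w$ with $vw = u$, observe that $vw = u \leq v$ puts you in the hypothesis of Axiom~3 for the pair $(v,w)$, and then Axiom~4 gives $vw = v$, hence $u = v$. Your version buys self-containment and makes explicit exactly which axioms carry the load (only Axioms 3 and 4; the morphism and surjectivity conditions on $C$ are irrelevant here), while the paper's version is shorter at the cost of deferring the key step to an external reference.
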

\begin{proof}
	Let $S$ be a weakly ordered monoid.  Lemma $2.1$ in \cite{S} shows that the defining conditions of a weakly ordered monoid imply that the preorder on $S$ is a partial order.  The result now follows from Proposition \ref{propn:Rtrivial iff preorder=partialorder}.
\end{proof}

    We will show that any finite $R$-trivial monoid $S$ is a weakly
ordered monoid using an argument outlined by Steinberg \cite{St}.
We must establish the existence of an upper semi-lattice $\calL$ and two maps
$C$ and $D$ from $S$ to $\calL$ that satisfy the conditions of Definition
\ref{defn:WOS}. We gather here the definitions of $\calL$, $C$ and $D$:
\begin{enumerate}
\item 
$\calL$ is the set of left ideals $Se$ generated by idempotents $e$ in $S$,
ordered by reverse inclusion;
\item
$C : S \to \calL$ is defined as $C(x) = Sx^\omega$,
where $x^\omega$ is the idempotent power of $x$ (see Lemma \ref{lemma:xox=xo});
\item
$D : S \to \calL$ is defined as $D(u) = C(e)$, where $e$ is some maximal element
in the set $\{s \in S : us = u\}$ (with respect to the preorder $\leq$).
\end{enumerate}

The following lemma is a simple statement about $R$ trivial monoids which is used frequently throughout the paper.

\begin{lemma}\label{rtrivial}
Suppose $S$ is an $R$-trivial monoid.
If $x,y,z \in S$ are such that $xyz=x$, then $xy=x$.

Consequently, if $x, y_1, y_2, \dots, y_m \in S$ are such that
$xy_1\cdots y_m = x$, then $xy_i = x$ for all $1 \leq i \leq m$.
\end{lemma}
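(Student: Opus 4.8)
The plan is to prove the first statement, that $xyz=x$ implies $xy=x$, using the characterization from Remark~\ref{propn:Rtrivial iff preorder=partialorder} that the preorder $\leq$ is a partial order on an $R$-trivial monoid. The key observation is that the hypothesis $xyz = x$ sandwiches the element $xy$ between $x$ and itself in the preorder. The second statement will then follow by a short induction using the first.

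First I would record the two inequalities that the hypothesis gives. On one hand, $x(y) = xy$ exhibits $xy$ as a right multiple of $x$, so $x \leq xy$. On the other hand, the hypothesis $xyz = x$ says $(xy)z = x$, which exhibits $x$ as a right multiple of $xy$, so $xy \leq x$. Thus $x \leq xy$ and $xy \leq x$ simultaneously. Since $S$ is $R$-trivial, Remark~\ref{propn:Rtrivial iff preorder=partialorder} tells us that $\leq$ is a partial order, in particular antisymmetric, so $x = xy$, which is exactly the conclusion.

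For the consequence, I would argue by induction on $m$, or equivalently by repeated application of the first statement. Given $xy_1\cdots y_m = x$, fix an index $i$ and regroup the product as $x \cdot (y_1\cdots y_{i-1}) \cdot (y_i \cdots y_m) = x$. Writing $a = y_1 \cdots y_{i-1}$ and $b = y_i \cdots y_m$, the first statement applied to $x \cdot a \cdot b = x$ yields $xa = x$, i.e.\ $x y_1 \cdots y_{i-1} = x$. Now regrouping this as $x \cdot (y_1 \cdots y_{i-1}) = x$ and peeling off one more factor, the first statement gives $x y_1 \cdots y_{i-1} = x$ reducing to $x y_1 \cdots y_{i-2} = x$, and iterating downward we reach $x y_1 \cdots y_i = x$; a symmetric regrouping $x \cdot y_i \cdot (y_{i+1}\cdots y_m) = x$ combined with the already-established $xy_1\cdots y_{i-1}=x$ then isolates $x y_i = x$.

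I do not anticipate a serious obstacle: the entire argument rests on the single translation of the multiplicative hypothesis into the antisymmetry of $\leq$, which Remark~\ref{propn:Rtrivial iff preorder=partialorder} supplies directly. The only point requiring mild care is the bookkeeping in the consequence, where one must correctly regroup the long product so that the desired factor $y_i$ sits in the position to which the base case applies; this is purely a matter of associativity and introduces no new ideas.
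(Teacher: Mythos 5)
Your proof is correct and takes essentially the same approach as the paper: the paper shows $xyS = xS$ and invokes $R$-triviality directly, while you show $x \leq xy \leq x$ and invoke antisymmetry of $\leq$ via Remark \ref{propn:Rtrivial iff preorder=partialorder}, which is the same two-sided comparison in equivalent language. One small slip in your bookkeeping for the consequence: ``iterating downward we reach $x y_1 \cdots y_i = x$'' is misindexed (downward iteration produces shorter prefixes, not the prefix of length $i$), but the detour is harmless since your final step needs only $x y_1 \cdots y_{i-1} = x$ together with the substitution $x y_i \cdots y_m = (x y_1 \cdots y_{i-1}) y_i \cdots y_m = x$, both of which you establish correctly.
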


\proof
If $xyz = x$ then $xyS = xS$. Therefore $xy=x$ by the definition of $S$ being $R$-trivial.
The second statement immediately follows from the first.
\endproof

The remainder of this section is dedicated to showing that these objects are
well defined and that they satisfy the conditions of Definition \ref{defn:WOS}.
We begin by recalling some classical results from the monoid literature.
The following is \cite[Proposition 6.1]{Pin2009}.

\begin{lemma}\label{lemma:xox=xo}
    If $S$ is a finite monoid, then for each $x \in S$, there exists a
    positive integer $\omega = \omega(x)$ such that $x^\omega$ is idempotent,
    i.e. $(x^\omega)^2 = x^\omega$. Furthermore, if $S$ is $R$-trivial, then we
    also have $x^{\omega}x = x^{\omega}$.
\end{lemma}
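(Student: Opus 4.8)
The plan is to treat the two assertions separately: the existence of an idempotent power is the standard finite-monoid fact, while the identity $x^\omega x = x^\omega$ is where $R$-triviality enters through Lemma \ref{rtrivial}.

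For the idempotent power I would argue by finiteness. Since $S$ is finite, the powers $x, x^2, x^3, \dots$ cannot all be distinct, so $x^i = x^j$ for some $i < j$. Setting $p = j - i$, one obtains $x^m = x^{m+p}$, and hence $x^{m+kp} = x^m$, for all $m \geq i$ and $k \geq 0$. I would then pick $\omega$ to be a multiple of $p$ with $\omega \geq i$; writing $\omega = sp$ and applying the periodicity with $k = s$ gives $x^{2\omega} = x^{\omega + sp} = x^\omega$, so $x^\omega$ is idempotent. This is precisely \cite[Proposition 6.1]{Pin2009}, so one may alternatively just cite it.

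For the second assertion I would invoke Lemma \ref{rtrivial} directly. The idempotency just obtained reads $x^\omega \cdot x^\omega = x^\omega$; expanding the second factor into $\omega$ copies of $x$, this is the relation
\[
x^\omega \underbrace{\,x\, x \cdots x\,}_{\omega} = x^\omega .
\]
Applying the consequence of Lemma \ref{rtrivial} with the distinguished element equal to $x^\omega$ and each $y_i = x$, I conclude that $x^\omega x = x^\omega$.

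I do not anticipate a real obstacle. The first part is a pigeonhole argument whose only delicate point is the bookkeeping---choosing $\omega$ to be simultaneously a multiple of the period $p$ and at least the pre-period $i$, so the two copies of $x^\omega$ collapse correctly. The second part, once idempotency is available, is an immediate application of $R$-triviality to strip the trailing factors of $x$ from $x^\omega x^\omega = x^\omega$.
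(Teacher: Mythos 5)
Your proof is correct and takes essentially the same approach as the paper: the first part is the same pigeonhole/periodicity argument (the paper takes $\omega = ip$, a particular multiple of the period that is at least the pre-period, exactly as you prescribe). For the second part, the paper writes the chain $x^\omega \leq x^\omega x \leq x^\omega x^\omega = x^\omega$ and uses antisymmetry of $\leq$, which is the same use of $R$-triviality as your appeal to Lemma \ref{rtrivial} (itself stated before this lemma in the paper, so no circularity), just packaged through the preorder instead.
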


\proof
    Consider the elements $x, x^2, x^3, \dots$. Since $S$ is finite, there
    exist positive integers $i$ and $p$ such that $x^{i+p} = x^i$.
    Then $x^{k+p} = x^{k}$ for all $k \geq i$, so if we take $\omega = ip$,
    then $(x^\omega)^2 = x^{\omega + ip} = x^{\omega}$.

    If $S$ is $R$-trivial, then $x^\omega \leq x^\omega x \leq x^\omega
    x^\omega = x^\omega$, and so $x^\omega x = x^\omega$.
\endproof

\begin{remark}\emph{
In what follows, if $x \in \mathbb{C}S$ and there exists $N$ such that $x^{N+1} = x^N$, we sometimes abuse notation by writing $x^\omega$ in place of $x^N$.
} \end{remark}

\begin{lemma}
\label{lemma:omega}
Let $S$ be a finite $R$-trivial monoid. 
For all $x$ and $y$ in $S$, 
\begin{multicols}{2}
\begin{enumerate}
\item\label{item_i} $(xy)^{\omega} x = (xy)^{\omega}$;
\item\label{item_i y} $(xy)^\omega y = (xy)^\omega$;
\item\label{item_ii} $(xy)^{\omega} x^{\omega} = (xy)^{\omega}$;
\item\label{item_ii y} $(xy)^\omega y^\omega = (xy)^\omega$;
\item\label{item_iii} $(x^{\omega} y^{\omega})^{\omega} x^{\omega} = (x^{\omega} y^{\omega})^{\omega}$;
\item\label{item_iv} $(x^{\omega} y^{\omega})^{\omega} = (x^{\omega} y^{\omega})^{\omega}\, (xy)$;
\item\label{item_v} $(x^{\omega} y^{\omega})^{\omega} = (x^{\omega} y^{\omega})^{\omega}\, (xy)^{\omega}$.
\item[]
\end{enumerate}
\end{multicols}
\end{lemma}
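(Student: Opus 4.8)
The plan is to handle the two idempotents $(xy)^\omega$ and $(x^\omega y^\omega)^\omega$ in turn, using only Lemma \ref{lemma:xox=xo}---which supplies $(xy)^\omega(xy) = (xy)^\omega$---together with the peeling-off principle of Lemma \ref{rtrivial}. The single engine behind items \ref{item_i}--\ref{item_ii y} is precisely $(xy)^\omega(xy) = (xy)^\omega$: reading the left side as $\bigl((xy)^\omega x\bigr)y$ and invoking Lemma \ref{rtrivial} with leading factor $(xy)^\omega$ peels off the trailing $y$, which is exactly item \ref{item_i}.

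Everything else for the first idempotent cascades from item \ref{item_i}. Right-multiplying item \ref{item_i} by $y$ and reapplying $(xy)^\omega(xy) = (xy)^\omega$ gives item \ref{item_i y}. Iterating item \ref{item_i} yields $(xy)^\omega x^k = (xy)^\omega$ for all $k \geq 1$, so taking $k = \omega(x)$ gives item \ref{item_ii}; item \ref{item_ii y} is the same argument starting from item \ref{item_i y}.

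For the second idempotent set $e = (x^\omega y^\omega)^\omega$. Item \ref{item_iii} is item \ref{item_i} applied to the pair $(x^\omega, y^\omega)$, giving $e x^\omega = e$, and the analogous use of item \ref{item_i y} gives $e y^\omega = e$. The key step is item \ref{item_iv}: here I would use the consequence form of Lemma \ref{rtrivial}. Writing $x^\omega = x^N$, the relation $e x^N = e$ peels off to $e x = e$, and likewise $e y^\omega = e$ gives $e y = e$; hence $e(xy) = (ex)y = ey = e$. Item \ref{item_v} then follows by iterating item \ref{item_iv}, since $e(xy)^k = e$ for all $k$ and the choice $k = \omega(xy)$ gives $e = e(xy)^\omega$.

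I expect no serious obstacle. The only place where the full multi-factor strength of Lemma \ref{rtrivial} is needed, rather than its one-step form, is the passage from $e x^\omega = e$ to $e x = e$ in item \ref{item_iv}, where a single $x$ must be peeled off an entire power $x^N$.
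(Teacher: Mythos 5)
Your proof is correct. For items \ref{item_i}--\ref{item_ii y} and \ref{item_iii}, \ref{item_v} it is essentially the paper's argument: the paper proves item \ref{item_i} by inlining the $R$-triviality argument (showing $(xy)^{\omega}xS = (xy)^{\omega}S$ directly) rather than citing Lemma \ref{rtrivial}, but that is the same reasoning, and the cascade to \ref{item_i y}, \ref{item_ii}, \ref{item_ii y}, \ref{item_iii} and the iteration giving \ref{item_v} match the paper exactly. The genuine difference is item \ref{item_iv}. The paper proves it by a six-step chain of equalities, repeatedly inserting and absorbing factors via Lemma \ref{lemma:xox=xo} and item \ref{item_iii}: $(x^{\omega}y^{\omega})^{\omega} = (x^{\omega}y^{\omega})^{\omega}y = (x^{\omega}y^{\omega})^{\omega}x^{\omega}y = (x^{\omega}y^{\omega})^{\omega}x^{\omega}xy = (x^{\omega}y^{\omega})^{\omega}xy$. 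You instead peel single letters off the relations $ex^{\omega}=e$ and $ey^{\omega}=e$ (the latter being item \ref{item_i y} applied to the pair $(x^{\omega},y^{\omega})$, which you correctly supply since it is not among the listed items) to get the stronger intermediate facts $ex=e$ and $ey=e$; these make both \ref{item_iv} and \ref{item_v} immediate, whereas the paper still needs a separate iteration for \ref{item_v}. Your route is arguably cleaner and more uniform, since every item then flows from a single peeling principle. One small correction to your closing remark: the multi-factor form of Lemma \ref{rtrivial} is not actually needed even there, since writing $ex^{N} = e\cdot x\cdot x^{N-1} = e$ and applying the one-step form with $z = x^{N-1}$ already yields $ex = e$.
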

\begin{proof}
(\ref{item_i}) Since $(xy)^{\omega}x \in (xy)^{\omega}S$, it follows that
$(xy)^{\omega}x S \subseteq (xy)^{\omega}S$.  To show the reverse inclusion,
note that $(xy)^{\omega} = (xy)^{\omega}(xy) = \big( (xy)^{\omega}x \big) y \in
(xy)^{\omega} x S$, where the first equality follows from Lemma
\ref{lemma:xox=xo}.  So $(xy)^{\omega} S \subseteq (xy)^{\omega} x S$.  Thus
$(xy)^{\omega} x S = (xy)^{\omega} S$.  Since $S$ is an $R$-trivial monoid, the
desired result follows.

(\ref{item_i y}) Apply (\ref{item_i}) and Lemma \ref{lemma:xox=xo}:
\[(xy)^{\omega}  = (xy)^{\omega} (xy)  = \Big( (xy)^{\omega} x \Big) y = (xy)^{\omega} y. \]

(\ref{item_ii}) This follows from applying (\ref{item_i}) repeatedly.

(\ref{item_ii y}) This follows from applying (\ref{item_i y}) repeatedly.

(\ref{item_iii}) Let $u=x^{\omega}$ and
$v=y^{\omega}$.  Now, by (\ref{item_i}), $(uv)^{\omega} u =
(uv)^{\omega}$.

(\ref{item_iv}) We compute:
	\begin{align*}
		(x^{\omega} y^{\omega})^{\omega} & = (x^{\omega} y^{\omega})^{\omega-1} x^{\omega} y^{\omega} \\
		& = (x^{\omega} y^{\omega})^{\omega-1} x^{\omega} y^{\omega} y & \text{(by Lemma \ref{lemma:xox=xo})}\\
		& = (x^{\omega} y^{\omega})^{\omega} y \\
		& = (x^{\omega} y^{\omega})^{\omega} x^{\omega} y & \text{(by (\ref{item_iii}))}\\
		& = (x^{\omega} y^{\omega})^{\omega} x^{\omega} x y & \text{(by Lemma \ref{lemma:xox=xo})}\\
		& = (x^{\omega} y^{\omega})^{\omega} x y & \text{(by (\ref{item_iii}))}
	\end{align*}

(\ref{item_v}) This follows by repeatedly applying part (\ref{item_iv}).
\end{proof}

	We are now ready to construct a lattice corresponding to the $R$-trivial monoid $S$.  Define
		\begin{displaymath}
			\calL := \{ Se \, : \ e \in S \text{ such that } e^2 = e \}.
		\end{displaymath}
    That is, $\calL$ is the set of left ideals generated by the idempotents of
    $S$.  Define a partial order on $\calL$ by
		\begin{displaymath}
			Se \preceq Sf \iff Se \supseteq Sf.
		\end{displaymath}

\begin{propn}
    \label{propn:S(ef)o=lubOfSeAndSf}
	If $e, f$ are idempotents in $S$, then $S(ef)^{\omega}$ is the least upper bound of $Se$ and $Sf$ in $\calL$.
\end{propn}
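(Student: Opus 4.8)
The plan is to unwind what ``least upper bound'' means for the reverse-inclusion order $\preceq$ and then to verify the two defining properties directly. Since $Se \preceq Sf \iff Se \supseteq Sf$, an upper bound of $Se$ and $Sf$ is a left ideal $Sh \in \calL$ with $Sh \subseteq Se$ and $Sh \subseteq Sf$, and the \emph{least} such upper bound is the one that \emph{contains} every other upper bound. Writing $g = (ef)^{\omega}$, which is idempotent by Lemma \ref{lemma:xox=xo} so that $Sg \in \calL$, I must show (a) $Sg \subseteq Se$ and $Sg \subseteq Sf$, and (b) every idempotent-generated ideal $Sh$ contained in both $Se$ and $Sf$ is contained in $Sg$.

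For (a) I would invoke Lemma \ref{lemma:omega}: parts (\ref{item_i}) and (\ref{item_i y}) give $(ef)^{\omega} e = (ef)^{\omega}$ and $(ef)^{\omega} f = (ef)^{\omega}$. Hence $g = ge \in Se$ and $g = gf \in Sf$, and since $Se$ and $Sf$ are left ideals this yields $Sg \subseteq Se \cap Sf$. Thus $Sg$ is an upper bound of $Se$ and $Sf$.

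For (b), suppose $h$ is idempotent with $Sh \subseteq Se$ and $Sh \subseteq Sf$. Then $h \in Se$ means $h = se$ for some $s \in S$, whence $he = se^2 = se = h$ using $e^2 = e$; similarly $hf = h$. Therefore $h(ef) = (he)f = hf = h$, and iterating gives $h(ef)^k = h$ for every $k \geq 1$, so in particular $h = h(ef)^{\omega} \in S(ef)^{\omega} = Sg$. As $Sg$ is a left ideal, $h \in Sg$ forces $Sh \subseteq Sg$, which is exactly the required minimality. Combining (a) and (b) identifies $S(ef)^{\omega}$ as the least upper bound.

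The argument is short, so the principal pitfall is bookkeeping rather than any deep obstacle: one must keep straight that $\preceq$ reverses inclusion, so that ``least upper bound'' translates into ``largest idempotent-generated ideal sitting below both''. The one genuinely useful move is converting the containment $Sh \subseteq Se$ into the right-stabilization identity $he = h$ by exploiting idempotency of $e$; from there the computation $h(ef) = h$ is immediate. Everything substantive about $R$-triviality has already been absorbed into Lemma \ref{lemma:omega}, which supplies the two identities needed in step (a), so no further appeal to the monoid's structure is required beyond the elementary manipulations above.
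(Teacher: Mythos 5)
Your proposal is correct and follows essentially the same route as the paper: both establish the upper-bound property via the identities $(ef)^{\omega}e = (ef)^{\omega}$ and $(ef)^{\omega} \in Sf$ (you cite Lemma \ref{lemma:omega}\,(\ref{item_i y}) where the paper factors $(ef)^{\omega} = ((ef)^{\omega-1}e)f$ directly, a negligible difference), and both prove minimality by converting $Sh \subseteq Se$ and $Sh \subseteq Sf$ into $he = h$, $hf = h$, then iterating $h(ef) = h$ to get $h = h(ef)^{\omega} \in S(ef)^{\omega}$.
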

\begin{proof}
	First, let us show that $S(ef)^{\omega}$ is an upper bound for $Se$ and $Sf$.  Since, by Lemma \ref{lemma:omega}\,(1), $(ef)^{\omega} = (ef)^{\omega} e$, we have that $(ef)^{\omega} \in Se$.  Hence $S(ef)^{\omega} \subseteq Se$ and $S(ef)^{\omega} \succeq Se$.  Moreover, $(ef)^{\omega} = \Big((ef)^{\omega-1}e\Big)f \in Sf$.  So $S(ef)^{\omega} \subseteq Sf$ and $S(ef)^{\omega} \succeq Sf$.  So $S(ef)^{\omega}$ is an upper bound for $Se$ and $Sf$.  
	
	Next, let us show that $S(ef)^{\omega}$ is the least upper bound for $Se$ and $Sf$.  Suppose $g$ is an idempotent in $S$ such that $Sg$ is an upper bound for $Se$ and $Sf$.  That is, $Sg \subseteq Se$ and $Sg \subseteq Sf$.  Since $Sg \subseteq Se$, $g = te$ for some $t \in S$.  But then $ge = (te)e = te^2 = te = g$.  Similarly, $Sg \subseteq Sf$ implies that $gf = g$.  So $g(ef) = (ge)f = gf = g$ and it follows that
		\begin{displaymath}
			g = g(ef) = \Big(g(ef)\Big)(ef) = g (ef)^2 = \Big(g(ef)\Big) (ef)^2 = g(ef)^3 = \cdots = g(ef)^{\omega}.
		\end{displaymath}
	Consequently, $g \in S(ef)^{\omega}$, $Sg \subseteq S(ef)^{\omega}$, and $Sg \succeq S(ef)^{\omega}$.  So $S(ef)^{\omega}$ is the least upper bound of $Se$ and $Sf$.
\end{proof}

As a result, we may define the join of two elements $Se$ and $Sf$ in $\calL$ by 
	\begin{displaymath}
		Se \vee Sf = S(ef)^{\omega}.
	\end{displaymath}
That is, $\calL$ is an upper semi-lattice with respect to this join operation.
This observation proves the following.

\begin{propn}\label{propn:C is surjective morphism}
    The map $C: S \to \calL$ defined by $C(x) = Sx^{\omega}$
    is a surjective monoid morphism.
\end{propn}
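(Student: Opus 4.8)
The plan is to verify the three requirements separately: that $C$ is well defined, that it is surjective, and that it is a homomorphism into the monoid $(\calL, \vee)$. The first two are immediate. For well-definedness, Lemma~\ref{lemma:xox=xo} guarantees that $x^\omega$ is idempotent, so $C(x) = Sx^\omega$ genuinely lies in $\calL$. For surjectivity, given any $Se \in \calL$ with $e^2 = e$ one has $e^\omega = e$, hence $C(e) = Se^\omega = Se$; so every element of $\calL$ is hit.

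The substance is the multiplicativity $C(uv) = C(u) \vee C(v)$. Here I would first record that $(\calL, \vee)$ is a commutative monoid whose identity is its least element $S = S\cdot 1$, and that $C(1) = S1^\omega = S$, so identities are preserved. By the definition of the join and Proposition~\ref{propn:S(ef)o=lubOfSeAndSf} applied to the idempotents $u^\omega$ and $v^\omega$, we have $C(u) \vee C(v) = Su^\omega \vee Sv^\omega = S(u^\omega v^\omega)^\omega$, while $C(uv) = S(uv)^\omega$. Thus the entire morphism property collapses to the single set equality
\[ S(uv)^\omega = S(u^\omega v^\omega)^\omega. \]

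To establish this I would argue by antisymmetry of $\preceq$, using the elementary observation that $Sa \subseteq Sb$ holds precisely when $a \in Sb$. For one comparison, parts~(\ref{item_ii}) and~(\ref{item_ii y}) of Lemma~\ref{lemma:omega} give $(uv)^\omega u^\omega = (uv)^\omega$ and $(uv)^\omega v^\omega = (uv)^\omega$, so that $(uv)^\omega$ lies in both $Su^\omega$ and $Sv^\omega$; this exhibits $S(uv)^\omega$ as an upper bound of $Su^\omega$ and $Sv^\omega$, and since $S(u^\omega v^\omega)^\omega$ is by construction their least upper bound we obtain $S(u^\omega v^\omega)^\omega \preceq S(uv)^\omega$. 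For the reverse comparison, part~(\ref{item_v}) gives $(u^\omega v^\omega)^\omega = (u^\omega v^\omega)^\omega (uv)^\omega \in S(uv)^\omega$, which yields $S(uv)^\omega \preceq S(u^\omega v^\omega)^\omega$. Antisymmetry then forces the two ideals to coincide.

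The only real content lies in this reduction to $S(uv)^\omega = S(u^\omega v^\omega)^\omega$ together with the availability of the identities of Lemma~\ref{lemma:omega}; once those are in hand the argument is a short bookkeeping exercise in the reverse-inclusion order. I expect the main subtlety to be keeping the direction of $\preceq$ straight — the least upper bound is the \emph{smallest} upper bound under $\preceq$, equivalently the \emph{largest} left ideal among the upper-bounding ideals under inclusion — so I would check carefully that the two comparisons above really do point in opposite directions before invoking antisymmetry.
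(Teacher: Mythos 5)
Your proposal is correct and follows essentially the same route as the paper: both reduce the morphism property to the single equality $S(uv)^\omega = S(u^\omega v^\omega)^\omega$, proving one containment via Lemma~\ref{lemma:omega} (namely $(u^\omega v^\omega)^\omega = (u^\omega v^\omega)^\omega (uv)^\omega$) and the other by exhibiting $S(uv)^\omega$ as an upper bound of $Su^\omega$ and $Sv^\omega$ and invoking the least-upper-bound characterization from Proposition~\ref{propn:S(ef)o=lubOfSeAndSf}, with surjectivity handled identically through $C(e)=Se^\omega=Se$. The only differences are cosmetic: you present the two comparisons in the opposite order and add explicit (correct) checks of well-definedness and preservation of the identity.
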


\begin{proof}
	Let $x,y \in S$.  By Lemma \ref{lemma:omega}\,(5), we know that $(x^{\omega} y^{\omega})^{\omega} = (x^{\omega} y^{\omega})^{\omega} (xy)^{\omega}$.  Hence, $(x^{\omega} y^{\omega})^{\omega} \in S(xy)^{\omega}$ and $S(x^{\omega} y^{\omega})^{\omega} \subseteq S(xy)^{\omega}$.  
	
	To show the reverse inclusion, we begin by noting that, by Lemma \ref{lemma:omega}\,(2), $(xy)^{\omega} = (xy)^{\omega} x^{\omega}$.  So $(xy)^{\omega} \in S x^{\omega}$ and $S(xy)^{\omega} \subseteq S x^{\omega}$.  That is, $S(xy)^{\omega} \succeq S x^{\omega}$.

 Lemma \ref{lemma:omega}\,(4), implies that $(xy)^{\omega} \in S y^{\omega}$, which implies that $S(xy)^{\omega} \subseteq S y^{\omega}$ and $S(xy)^{\omega} \succeq S y^{\omega}$.  In particular, $S (xy)^{\omega}$ is an upper bound for both $S x^{\omega}$ and $S y^{\omega}$.  So $S (xy)^{\omega}	\succeq S x^{\omega} \vee S y^{\omega} = S (x^{\omega} y^{\omega})^{\omega}$, that is, $S(xy)^{\omega} \subseteq S(x^{\omega} y^{\omega})^{\omega}$.
	
	Thus $C(xy) = S(xy)^{\omega} = S(x^{\omega} y^{\omega})^{\omega} = S x^{\omega} \vee S y^{\omega} = C(x) \vee C(y)$, and $C$ is a monoid morphism.  Finally, we know that every element of $\calL$ is of the form $Se$ for some idempotent $e$ in $S$.  But then $C(e) = Se^{\omega} = Se$; that is, $C$ is a surjective morphism.
\end{proof}

Here is an alternate and useful characterization of $C(x)$.
\begin{propn}\label{propn:C(x)=aInSstax=a}
	$C(x) = \{ a \in S \, : \ ax=a \}$ for all $x \in S$.
\end{propn}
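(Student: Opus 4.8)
The plan is to prove the set equality $Sx^{\omega} = \{a \in S : ax = a\}$ directly, by establishing the two inclusions separately. Both directions should follow quickly from the absorption relation $x^{\omega} x = x^{\omega}$ furnished by Lemma \ref{lemma:xox=xo}, together with the definition $C(x) = Sx^{\omega}$.

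For the inclusion $C(x) \subseteq \{a \in S : ax = a\}$, I would take an arbitrary element $a \in Sx^{\omega}$, write it as $a = sx^{\omega}$ for some $s \in S$, and compute
\[
ax = sx^{\omega}x = sx^{\omega} = a,
\]
where the middle equality is exactly Lemma \ref{lemma:xox=xo}. Thus every element of the left ideal $Sx^{\omega}$ fixes $x$ on the right, as required.

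For the reverse inclusion, I would start with any $a$ satisfying $ax = a$ and iterate this identity: $a = ax = (ax)x = ax^{2} = \cdots = ax^{k}$ for every $k \geq 1$. Taking $k = \omega$ gives $a = ax^{\omega} \in Sx^{\omega} = C(x)$, which completes the argument.

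The proof is essentially immediate once Lemma \ref{lemma:xox=xo} is in hand, so there is no serious obstacle to overcome; the only place the $R$-trivial hypothesis enters is through that lemma. It is precisely the fact that $x^{\omega}$ absorbs $x$ on the right that makes membership in the left ideal $Sx^{\omega}$ coincide with the property of fixing $x$, and the iteration in the second inclusion then reduces fixing $x$ to fixing its idempotent power $x^{\omega}$.
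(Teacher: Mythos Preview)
Your proof is correct and essentially identical to the paper's: both directions use exactly the same argument (Lemma~\ref{lemma:xox=xo} for $Sx^{\omega} \subseteq \{a : ax = a\}$, and iteration of $ax=a$ up to the $\omega$-th power for the reverse inclusion). There is nothing to add.
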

\proof
	Take an arbitrary element in $C(x) = Sx^{\omega}$, say $tx^{\omega}$.  Since $\big( tx^{\omega} \big) x = t \big( x^{\omega} x \big) = t x^{\omega}$ by Lemma \ref{lemma:xox=xo}, we see that $tx^{\omega} \in \{ a \in S \, : \ ax=a \}$.  On the other hand, take $b \in \{ a \in S \, : \ ax =a \}$.  Then 
		\begin{displaymath}
			bx^{\omega} = (bx) x^{\omega-1} = bx^{\omega-1} = (bx) x^{\omega-2} = b x^{\omega-2} = \cdots = bx = b.
		\end{displaymath}
	Therefore, $b \in Sx^{\omega}$.
\endproof

We now define the map $D: S \to \calL$.  Given $u \in S$, let $D(u) = C(e)$, where $e$ is a maximal element in the set $\{ s \in S \, : \ us = u \}$.  
To check that $D$ is well defined, let $e$ and $f$ be two distinct maximal elements in $\{ s \in S \, : \ us = u \}$.  Since $e \leq ef$ and $u(ef) = (ue)f = uf = u$, by the maximality of $e$, $e = ef$.  Similarly, since $f \leq fe$ and $u(fe) = u$, the maximality of $f$ implies $f = fe$.  Then, by Proposition \ref{propn:C is surjective morphism}, 
	\begin{displaymath}
		C(e) = C(ef) = C(e) \vee C(f) = C(f) \vee C(e) = C(fe) = C(f).
	\end{displaymath}
Note that the maximality of $e$ and $ue^2 = u$ also implies that $e = e^2$, that is, $e$ is idempotent.

The next proposition shows that the maps $C$ and $D$ interact in precisely the manner given in conditions $2$ and $3$ in Definition \ref{defn:WOS}.  The following lemma will help us prove this proposition.

\begin{lemma}\label{lemma:ifSGobblesHigherThenSGobblesLower}
	Let $x,y \in S$.  If $x \leq y$, then $C(x) \preceq C(y)$.
\end{lemma}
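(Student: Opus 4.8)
The plan is to unwind the definition of $\preceq$ and then to work with the set-theoretic description of $C$ rather than with the left ideals $Sx^{\omega}$ directly. Since $\preceq$ is reverse inclusion, the inequality $C(x) \preceq C(y)$ is by definition equivalent to the containment $C(y) \subseteq C(x)$. So although the hypothesis reads $x \leq y$, the goal is to show that the \emph{larger} element $y$ yields the \emph{smaller} set $C(y)$; keeping these two directions straight is the one thing to be careful about.

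With this reformulation in hand, I would invoke Proposition \ref{propn:C(x)=aInSstax=a}, which identifies $C(x)$ with $\{a \in S : ax = a\}$. The containment $C(y) \subseteq C(x)$ then becomes the implication: for every $a \in S$, if $ay = a$ then $ax = a$. This turns the claim into a purely combinatorial statement about the multiplication in $S$, so that the idempotent powers $x^{\omega}, y^{\omega}$ and the join operation on $\calL$ no longer need to enter the argument.

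To prove the implication, I would use the hypothesis $x \leq y$ to produce $w \in S$ with $xw = y$. Given any $a$ with $ay = a$, substituting $y = xw$ gives $a(xw) = a$, that is, $axw = a$. At this point Lemma \ref{rtrivial} applies directly, with $a$, $x$, $w$ playing the roles of $x$, $y$, $z$: from $axw = a$ we conclude $ax = a$, whence $a \in C(x)$. This establishes $C(y) \subseteq C(x)$, which is exactly $C(x) \preceq C(y)$.

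The argument is short, and I expect the only real pitfall to be the orientation of the order: because $\preceq$ is defined by reverse inclusion, it is tempting to aim for $C(x) \subseteq C(y)$, which is the wrong containment. The key observation is that Lemma \ref{rtrivial} is tailor-made to strip the trailing factor $w$ off of $axw = a$; once that is recognized, no further computation is required.
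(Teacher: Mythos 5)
Your proof is correct and follows essentially the same route as the paper's: both reduce $C(x) \preceq C(y)$ to the containment $C(y) \subseteq C(x)$ via Proposition \ref{propn:C(x)=aInSstax=a}, write $y = xw$, and cancel the trailing factor from $axw = a$. The only cosmetic difference is that you cite Lemma \ref{rtrivial} for the cancellation, whereas the paper inlines the same argument (deducing $ax = a$ from $ax \leq a$, $a \leq ax$, and $R$-triviality).
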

\proof
			If $s \in C(y)$, then $sy = s$.  Since $x \leq y$, there exists $t \in S$ such that $y = xt$.  So $sxt = s$, implying $sx \leq s$.  That is, $s \in C(x)$.  Hence $C(y) \subseteq C(x)$, or $C(x) \preceq C(y)$ since $s \leq sx$ and $S$ is $R$-trivial.
\endproof

\begin{propn}\label{propn:C and D satisfy WOS conditions}
Let $u, v \in S$. 
(i) If $uv \leq u$, then $C(v) \preceq D(u)$.
(ii) If $C(v) \preceq D(u)$, then $uv = u$.
\end{propn}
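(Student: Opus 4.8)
The plan is to reduce both implications to statements about the explicit descriptions of $C$ and $D$ already available. Throughout I will use the characterization $C(x) = \{a \in S : ax = a\}$ from Proposition \ref{propn:C(x)=aInSstax=a}, together with the fact that $\preceq$ is reverse inclusion, so that $C(v) \preceq D(u) = C(e)$ is equivalent to the set containment $C(e) \subseteq C(v)$, i.e. to the assertion that $ae = a$ implies $av = a$. Here $e$ denotes the fixed maximal (hence, as already noted, idempotent) element of $\{s \in S : us = u\}$ used to define $D(u)$; in particular $ue = u$ and $e \in C(e)$.

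For part (i), I would first observe that $uv \leq u$ means $uvw = u$ for some $w \in S$, and then invoke Lemma \ref{rtrivial} to upgrade this to $uv = u$; thus $v$ itself lies in $\{s : us = u\}$. The crux is to exploit the maximality of $e$ through the auxiliary element $ev$. On one hand $u(ev) = (ue)v = uv = u$, so $ev \in \{s : us = u\}$; on the other hand $e \leq ev$, witnessed by right multiplication by $v$. Since $S$ is $R$-trivial the preorder is a genuine partial order, so maximality of $e$ forces $e = ev$. Finally, for any $a$ with $ae = a$ I obtain $av = (ae)v = a(ev) = ae = a$, which is exactly the containment $C(e) \subseteq C(v)$, i.e. $C(v) \preceq D(u)$.

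For part (ii) the argument runs in the easy direction. From $C(v) \preceq C(e)$, that is $C(e) \subseteq C(v)$, and the fact that $e \in C(e)$ (because $e$ is idempotent), I conclude $e \in C(v)$, meaning $ev = e$. Multiplying on the left by $u$ and using $ue = u$ then gives $uv = (ue)v = u(ev) = ue = u$. The only genuinely delicate point is the maximality argument in part (i): one must manufacture an element, namely $ev$, that simultaneously stays inside the fixed-point set $\{s : us = u\}$ and dominates $e$ in the preorder, so that maximality collapses it to $e = ev$. Once that identity is secured, the rest is routine associativity bookkeeping.
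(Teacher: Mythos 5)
Your proof is correct, but part (i) takes a genuinely different route from the paper's. The paper reduces $uv \leq u$ to $uv = u$ (via antisymmetry of $\leq$), then \emph{chooses} a maximal element $e$ of $\{s \in S : us = u\}$ with $v \leq e$, identifies $C(e) = D(u)$ using the well-definedness of $D$, and invokes the monotonicity lemma (Lemma \ref{lemma:ifSGobblesHigherThenSGobblesLower}: $x \leq y$ implies $C(x) \preceq C(y)$) to conclude $C(v) \preceq C(e)$. You instead work with an \emph{arbitrary} fixed maximal element $e$, manufacture the auxiliary element $ev$, use maximality together with $R$-triviality to force $ev = e$, and then verify the containment $C(e) \subseteq C(v)$ directly from the characterization $C(x) = \{a : ax = a\}$ of Proposition \ref{propn:C(x)=aInSstax=a}. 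Your route bypasses the monotonicity lemma entirely and does not require selecting a maximal element dominating $v$; indeed, since your argument establishes $C(v) \preceq C(e)$ for \emph{every} maximal element $e$ of the stabilizer, it would even re-derive the well-definedness of $D$ as a byproduct (apply it with $v$ equal to another maximal element and use antisymmetry of $\preceq$). The paper's approach, in exchange, factors the work through a reusable lemma and a shorter final step. Your part (ii) is essentially the paper's argument, with the minor variation that you extract $ev = e$ from $e \in C(e)$ (idempotence of $e$) and then multiply by $u$, whereas the paper observes $u \in C(e)$ directly from $ue = u$ and concludes $uv = u$ in one step; both are sound.
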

\proof 
	(i) \ Since $u \leq uv$, $u = uv$.  Hence $v$ lies in the set $\{ s \in S \, : \ us = u \}$.  Let $e$ be a maximal element in this set such that $v \leq e$.  Then, by Lemma \ref{lemma:ifSGobblesHigherThenSGobblesLower}, $C(v) \preceq C(e) = D(u)$.
\vskip3pt	
	(ii) By definition, $D(u) = C(e)$, where $e$ is a maximal element of $\{ s \in S \, : \ us = u \}$.  So if $C(v) \preceq D(u)$, then $C(v) \preceq C(e)$.  Hence $C(e) \subseteq C(v)$.  Since $ue = u$, $u$ lies in $C(e)$.  So $u$ is also a member of $C(v)$; that is, $uv = u$.  
\endproof

Propositions \ref{propn:C is surjective morphism} and \ref{propn:C and D satisfy WOS conditions} tell us that an $R$-trivial monoid is a weakly ordered monoid.  Combining this with Corollary \ref{cor:WOS is RtrivialMonoid}, we have the following result.

\begin{thm}
	A finite monoid $S$ is a weakly ordered monoid if and only if it is an $R$-trivial monoid. 
\end{thm}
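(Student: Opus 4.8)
The plan is to prove the two implications separately, drawing on the results already assembled in this section. The forward direction --- that every weakly ordered monoid is $R$-trivial --- is exactly the content of Corollary~\ref{cor:WOS is RtrivialMonoid}, so nothing further is needed there; it rests on Proposition~\ref{propn:Rtrivial iff preorder=partialorder} together with the fact (cited from Schocker) that the weakly ordered axioms force the preorder $\leq$ to be antisymmetric.

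For the reverse direction --- that every finite $R$-trivial monoid $S$ is weakly ordered --- I would exhibit an explicit witnessing triple $(\calL, C, D)$ and verify Definition~\ref{defn:WOS} axiom by axiom. I would take $\calL = \{ Se : e^2 = e \}$ ordered by reverse inclusion, with join $Se \vee Sf = S(ef)^{\omega}$; Proposition~\ref{propn:S(ef)o=lubOfSeAndSf} shows this really is the least upper bound, so that $\calL$ is a finite upper semi-lattice (finite because $S$ is finite). Then I would set $C(x) = Sx^{\omega}$ and $D(u) = C(e)$ for $e$ a maximal element of $\{s : us = u\}$, having first checked that $D$ does not depend on the choice of $e$. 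With these definitions in place, axioms (1) and (2) --- that $C$ is a surjective monoid morphism --- are precisely Proposition~\ref{propn:C is surjective morphism}, and axioms (3) and (4) are parts (i) and (ii) of Proposition~\ref{propn:C and D satisfy WOS conditions}. Combining the two directions then yields the theorem.

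The genuine work is not in this final assembly but in the technical lemmas underpinning it, and I expect the main obstacle to be verifying that $C$ is a monoid morphism, i.e. the identity $S(xy)^{\omega} = S(x^{\omega} y^{\omega})^{\omega}$. This equality is far from obvious and relies on the delicate manipulations of idempotent powers collected in Lemma~\ref{lemma:omega} --- in particular the absorption identity $(xy)^{\omega} x = (xy)^{\omega}$ of part (1) and the computation in part (4) relating $(x^{\omega} y^{\omega})^{\omega}$ to $(x^{\omega} y^{\omega})^{\omega}(xy)$. Once those are secured, the remaining ingredients --- the well-definedness of $D$, the containment behaviour of $C$ under $\leq$ (Lemma~\ref{lemma:ifSGobblesHigherThenSGobblesLower}), and the characterization $C(x) = \{a : ax = a\}$ of Proposition~\ref{propn:C(x)=aInSstax=a} --- make the verification of axioms (3) and (4) comparatively routine.
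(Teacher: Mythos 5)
Your proposal is correct and follows exactly the paper's own route: the forward direction is Corollary~\ref{cor:WOS is RtrivialMonoid}, and the reverse direction assembles the same triple $(\calL, C, D)$ via Propositions~\ref{propn:S(ef)o=lubOfSeAndSf}, \ref{propn:C is surjective morphism}, and \ref{propn:C and D satisfy WOS conditions}, with the well-definedness of $D$ checked along the way. Your assessment of where the real technical work lies (Lemma~\ref{lemma:omega} and the morphism property of $C$) also matches the paper's structure.
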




\section{Constructing idempotents}\label{idempotents}


\begin{defn}
Let $A$ be a finite dimensional algebra with identity $1$. We say that a set of
nonzero elements $\Lambda = \{ e_J : J \in \mathcal{I}\}$ of $A$ is a {\bf
complete system of primitive orthogonal idempotents for $A$} if:
\begin{enumerate}
\item\label{Aidemp} 
    each $e_J$ is \emph{idempotent}:
    that is, $e_J^2 = e_J$ for all $J \in \mathcal{I}$;
\item\label{Aortho}
    the $e_J$ are pairwise \emph{orthogonal}:
    $e_J e_K = 0$ for $J,K \in \mathcal{I}$ with $J \neq K$;
\item\label{Aprim}
    each $e_J$ is \emph{primitive}
    (meaning that it cannot be further decomposed into orthogonal idempotents):
    if $e_J = x+y$ with $x$ and $y$ orthogonal idempotents in $A$,
    then $x=0$ or $y=0$;
\item\label{Acomp}
    $\{e_J : J \in \mathcal{I}\}$ is \emph{complete} (meaning that the elements
    sum to the identity): $\sum_{J \in \mathcal{I}} e_J = 1$.
\end{enumerate}
\end{defn}

\begin{remark}\emph{
    If $\Lambda$ is a \emph{maximal} set of nonzero elements satisfying
    conditions (\ref{Aidemp}) and (\ref{Aortho}), then $\Lambda$ is a complete
    system of primitive orthogonal idempotents (that is, (\ref{Aprim}) and
    (\ref{Acomp}) also hold). Indeed, $e_J$ is primitive, for if $e_J$ could be
    written as $x+y$, then we could replace $e_J$ in $\Lambda$ with $x$ and $y$,
    contradicting the maximality of $\Lambda$. To see (\ref{Acomp}), we just note
    that if $\sum_K e_K \neq 1$, then $1-\sum_K e_K$ is idempotent and
    orthogonal to all other $e_K$. Combining this element with $\Lambda$ would
    again contradict the maximality of $\Lambda$.
} \end{remark}

\vspace{.2in}
Let $S$ denote a finite weakly ordered monoid with $C$ and $D$ being the associated ``content'' and ``descent'' maps from $S$ to an upper semi-lattice $\mathcal{L}$. We let $\mathcal{G}$ denote a set of generators of $S$.
The main goal of this paper is to build a method for finding a complete system of orthogonal idempotents for the monoid algebra $\mathbb{C} S$. In particular, this solves the problem posed by Norton about the $0$-Hecke algebra for the symmetric group.


For each $J \in \mathcal{L}$, we define a \textbf{Norton element} $A_J T_J$.  Let us begin by defining $T_J$:

 \[T_J = \Big(\prod_{\substack{g \in \mathcal{G}\\ C(g) \preceq J}}  g^\omega\Big)^\omega \in S.\]

\begin{remark}\emph{
A different ordering of the set $\calG$ of generators may produce different $T_J$'s; so we fix an (arbitrarily chosen) order uniformly for all $J$. 
} \end{remark}

We now define the $A_J$ in the Norton element $A_J T_J$.  First we let \[B_J = \prod_{\substack{g \in \mathcal{G}\\C(g) \not \preceq J}} (1-g^\omega) \in \mathbb{C}S.\]

In the spirit of Lemma \ref{lemma:xox=xo},
we would like to raise $B_J$ to a sufficiently high power so that it is idempotent. However, $B_J$ is not an element of the monoid $S$, so $(B_J)^\omega$ may not be well defined. The following lemma and corollary shows that it actually is.

\begin{defn}
Given $x=\sum_{w\in S} c_w w \in \mathbb{C}S$, the \textbf{coefficient} of $w$ in $x$ is $c_w$. We say that $w$ is a \textbf{term} of $x$ if the coefficient of $w$ in $x$ is nonzero.
\end{defn}

\begin{lemma}\label{lemma:increasing}
Let $b \in S$ and suppose $bx^\omega = b$ for some $x \in \mathcal{G}$ with $C(x) \not \preceq J$. If $c$ is a term of $bB_J$, then $c>b$. 
\end{lemma}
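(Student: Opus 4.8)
The plan is to expand $bB_J$ over subsets of the relevant generators and show that every surviving term lies strictly above $b$. First I would order the generators $g \in \calG$ with $C(g) \not\preceq J$ as $g_1, \dots, g_n$ according to the fixed chosen order, so that $B_J = \prod_{i=1}^n (1 - g_i^\omega)$ and hence
\[
    bB_J = \sum_{I \subseteq \{1,\dots,n\}} (-1)^{|I|}\, b\, g_{i_1}^\omega \cdots g_{i_k}^\omega,
\]
where $I = \{i_1 < \dots < i_k\}$ and the product is taken in increasing order of index. Every monomial appearing here has the form $b\,w$ with $w \in S$, so it lies in $bS$; by the definition of the preorder this gives $b \leq b\,g_{i_1}^\omega \cdots g_{i_k}^\omega$. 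Thus each term $c$ of $bB_J$ automatically satisfies $c \geq b$, and the lemma reduces to showing that $b$ itself is \emph{not} a term, i.e. that the coefficient of $b$ in $bB_J$ vanishes.

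To compute that coefficient I would determine exactly which subsets $I$ satisfy $b\,g_{i_1}^\omega \cdots g_{i_k}^\omega = b$. On the one hand, if this product equals $b$, then Lemma \ref{rtrivial} in its iterated form forces $b\,g_i^\omega = b$ for every $i \in I$. On the other hand, if $b\,g_i^\omega = b$ for every $i \in I$, then substituting one factor at a time collapses the whole product back to $b$. Hence, setting $P := \{\, i : b\,g_i^\omega = b \,\}$, the product collapses to $b$ precisely when $I \subseteq P$.

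Finally, the hypothesis $b\,x^\omega = b$ with $C(x) \not\preceq J$ says exactly that $x$ is one of the $g_i$ and that its index lies in $P$, so $P \neq \emptyset$. The coefficient of $b$ in $bB_J$ is therefore
\[
    \sum_{I \subseteq P} (-1)^{|I|} = (1-1)^{|P|} = 0,
\]
since $|P| \geq 1$. Consequently every surviving term $c$ satisfies both $c \geq b$ and $c \neq b$, that is $c > b$, as claimed. The only genuinely delicate point is the second paragraph: recognizing that the telescoping cancellation is governed by the set $P$ of ``absorbed'' generators, and that Lemma \ref{rtrivial} is precisely what pins down the collapsing subsets. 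Everything else is bookkeeping with the binomial sum, and the role of the hypothesis is simply to guarantee that $P$ is nonempty so that this sum vanishes.
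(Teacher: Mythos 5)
Your proof is correct and follows essentially the same route as the paper's: both expand $B_J$ as a signed sum over subsets of the generators $g$ with $C(g)\not\preceq J$, use Lemma \ref{rtrivial} to show the subsets collapsing to $b$ are exactly those contained in the (nonempty) set of generators absorbed by $b$, and conclude the coefficient of $b$ vanishes by the alternating binomial identity, while every term $c=bz$ satisfies $c\geq b$. The only cosmetic difference is that you write the vanishing sum as $(1-1)^{|P|}$ where the paper spells out $1-m+\binom{m}{2}-\cdots+(-1)^m=0$, and you make the ``if and only if'' characterization of collapsing subsets slightly more explicit.
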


\proof
Let $\mathcal{D} = \{ x^\omega : x \in \mathcal{G}, C(x) \not \preceq J, bx^\omega=b \}$. By assumption $\mathcal{D}$ is not empty. Let  $g_1, g_2, \dots, g_m$ be the generators which appear in the definition of $B_J$. Then \[B_J = \sum_{i_1<i_2<\dots <i_k} (-1)^k g_{i_1}^\omega g_{i_2}^\omega\cdots g_{i_k}^\omega.\] 
It follows from Lemma \ref{rtrivial} that
the coefficient of $b$ in $bB_J$ is counting the terms in $B_J$ where each of $g_{i_1}, \dots, g_{i_k}$ come from $\mathcal{D}$, weighted with sign $(-1)^k$. If $|\mathcal{D}|=m\geq1$ then this is $1-m+\binom{m}{2}-\binom{m}{3} + \dots + (-1)^m = 0$. Therefore $c \neq b$.
The statement now follows from the definition of order, as every term $c$ of $bB_J$ must be of the form $c = bz$ for some term $z$ appearing in $B_J$, and hence $c \geq b$.
\endproof

\begin{lemma}\label{lemma:yoBJ = 0}
	For every $J \in \calL$, there exists an integer $N$ such that $y^{\omega} B_J^N = 0$ for all $y \in \calG$ with $C(y) \npreceq J$.  
\end{lemma}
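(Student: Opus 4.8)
The plan is to fix a single $y \in \calG$ with $C(y) \npreceq J$ and show that the terms of the successive powers $y^\omega B_J^n$ climb strictly upward in the partial order $(S,\leq)$ as $n$ grows; since $S$ is finite this forces the powers to vanish once $n$ exceeds the length of the longest chain. The engine is Lemma \ref{lemma:increasing}, whose hypothesis on an element $b$ is exactly that $b\,x^\omega = b$ for some $x \in \calG$ with $C(x)\npreceq J$; I will abbreviate this hypothesis as the property $P(b)$. The first observation is that $P(y^\omega)$ holds: taking $x=y$, we have $y^\omega y^\omega = y^\omega$ by idempotency, and $C(y)\npreceq J$ by assumption.

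The heart of the argument is an induction on $n$ proving, for every \emph{term} $c$ of $y^\omega B_J^n$ (that is, every monoid element occurring with nonzero coefficient), the two assertions: (a) $P(c)$ holds; and (b) $c$ sits at the top of a strictly increasing chain $y^\omega = c_0 < c_1 < \cdots < c_n = c$ in $S$. The base case $n=0$ is immediate from the previous paragraph. For the inductive step I would write $y^\omega B_J^{n+1} = (y^\omega B_J^n)B_J$: any term $c$ here must appear with nonzero coefficient in $c'B_J$ for at least one term $c'$ of $y^\omega B_J^n$. By the inductive hypothesis $P(c')$ holds, so Lemma \ref{lemma:increasing} applies and gives $c > c'$, which extends the chain below $c'$ to one of length $n+1$ topped by $c$. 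To re-establish $P(c)$ and keep the induction running, I note that because $c \neq c'$ the witnessing term of $B_J$ is nontrivial, so $c = c'\,g_{i_1}^\omega \cdots g_{i_k}^\omega$ with $k\geq 1$ and each $C(g_{i_j})\npreceq J$; idempotency of $g_{i_k}^\omega$ then yields $c\,g_{i_k}^\omega = c$, i.e. $P(c)$ with $x=g_{i_k}$.

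To conclude, recall from Remark \ref{propn:Rtrivial iff preorder=partialorder} that $\leq$ is a genuine partial order on the finite set $S$, so strictly increasing chains have bounded length. Choosing $N$ larger than this bound (for instance $N=|S|$) makes assertion (b) impossible, so $y^\omega B_J^N$ has no terms, i.e. $y^\omega B_J^N = 0$. Since $N$ depends only on $S$ and not on $y$, the same $N$ works simultaneously for all $y\in\calG$ with $C(y)\npreceq J$, as required.

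The step I expect to be the main obstacle is dealing with cancellation in $\bbC S$: a priori a term created in some summand $c'B_J$ could cancel against a contribution from a different $c'$, so one cannot naively say ``each term of $y^\omega B_J^{n+1}$ equals $c'z$.'' The resolution, which I would state carefully, is that cancellation only ever \emph{deletes} terms and never creates them; hence any term surviving in the collected sum $y^\omega B_J^{n+1}$ must already occur with nonzero coefficient in at least one individual product $c'B_J$, which is precisely what licenses the appeal to Lemma \ref{lemma:increasing} at each stage.
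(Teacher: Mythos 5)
Your proof is correct and takes essentially the same route as the paper's: both iterate Lemma \ref{lemma:increasing} through the successive factors of $B_J$, showing that every surviving term of $y^\omega B_J^n$ caps a strictly increasing chain of length $n$ in the finite poset $(S,\leq)$, which forces $y^\omega B_J^N=0$ once $N$ exceeds the longest chain. Your inductive maintenance of the property $P(c)$ (together with the observation that cancellation deletes but never creates terms) is precisely the paper's step that any term of $y^\omega B_J^{k}$ other than $y^\omega$ has the form $y^\omega g_1^\omega\cdots g_m^\omega$ with $m\geq 1$ and hence absorbs $g_m^\omega$.
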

\proof
Let $N = \ell+1$, where $\ell$ is the length of
the longest chain of elements in the poset $(S, \leq)$. 

Suppose $y^\omega B_J^N \neq 0$.  Let $c_N$ be a term of $B_J^{N}$. Then
$c_N$ is a term of $c_{N-1}B_J$ for some term $c_{N-1}$ in $y^\omega
B_J^{N-1}$. Since $y^\omega y^\omega = y^\omega$, Lemma
\ref{lemma:increasing} implies that $y^\omega$ is not a term of $y^\omega
B_J^k$ for any $k\geq1$, so that $c_{N-1} = y^\omega g_1^\omega\cdots
g_m^\omega$ for some $m\geq1$ and $g_i\in\calG$ with $C(g_i)\not\preceq J$. In
particular, $c_{N-1}g_m^\omega = c_{N-1}$, and so, again by Lemma
\ref{lemma:increasing}, $c_{N} > c_{N-1}$. 
Repeated application of this argument produces a decreasing chain \[c_N > c_{N-1} > c_{N-2} > \cdots > c_1\]
of elements in $S$, contradicting the fact that the length of the
longest chain of elements in $(S,\leq)$ is $\ell$.
\endproof

\begin{cor}\label{cor:finiteN}
For every $J \in \calL$ there exists an $N$ such that $B_J^{N+1} = B_J^N$.
\end{cor}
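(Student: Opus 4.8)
The plan is to derive Corollary \ref{cor:finiteN} from Lemma \ref{lemma:yoBJ = 0} by showing that, once $B_J^N$ annihilates every ``bad'' generator power on the left, multiplying by one more copy of $B_J$ has no effect. Fix $J \in \calL$ and let $N$ be the integer supplied by Lemma \ref{lemma:yoBJ = 0}, so that $y^\omega B_J^N = 0$ for all $y \in \calG$ with $C(y) \npreceq J$. I want to prove $B_J^{N+1} = B_J^N$, i.e. $B_J^N (B_J - 1) = 0$, equivalently $B_J^N B_J = B_J^N$.

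First I would expand $B_J = \prod_{g \in \calG,\, C(g)\npreceq J}(1 - g^\omega)$ and exploit its multiplicative structure. Writing $B_J = (1 - y^\omega) B_J'$, where $y$ is one of the bad generators and $B_J'$ is the product over the remaining bad generators, does not quite isolate a single factor because the factors commute only formally, not in $S$; so instead I would work directly with the extra factor on the right. The key computation is that for each bad generator $g$ (that is, $g \in \calG$ with $C(g) \npreceq J$), we have $B_J^N \cdot g^\omega = B_J^N$. Granting this, every factor $(1 - g^\omega)$ appearing in $B_J$ satisfies $B_J^N (1 - g^\omega) = B_J^N - B_J^N g^\omega = 0$, and multiplying these relations together (inserting the factors one at a time, from the left copy $B_J^N$ outward) forces $B_J^{N} B_J = B_J^N \prod_g (1 - g^\omega) = B_J^N$, as desired.

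To establish $B_J^N g^\omega = B_J^N$ for each bad generator $g$, I would again appeal to Lemma \ref{lemma:yoBJ = 0} together with Lemma \ref{lemma:increasing}. The idea is that every term $c$ of $B_J^N$ is of the form $z g_{i_1}^\omega \cdots g_{i_k}^\omega$ with all the $g_{i_j}$ bad, and such a term already ends in a bad-generator power, so multiplying on the right by $g^\omega$ either leaves it fixed (when $c g^\omega = c$) or strictly increases it (by Lemma \ref{lemma:increasing}). The vanishing of the leading-power terms, guaranteed by Lemma \ref{lemma:yoBJ = 0}, is what kills the strictly-increasing contributions and forces $B_J^N g^\omega = B_J^N$. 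I expect this step to be the main obstacle: it requires a careful bookkeeping of which terms of $B_J^N$ are fixed by right multiplication by $g^\omega$ and a verification that the sign-weighted coefficients of the newly produced (strictly larger) terms cancel, mirroring the binomial cancellation argument already used in the proof of Lemma \ref{lemma:increasing}.

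Finally, I would note that the resulting $N$ is exactly the length bound $\ell + 1$ from Lemma \ref{lemma:yoBJ = 0}, so the stabilization happens at a concrete, combinatorially meaningful power; no further estimate is needed, and the corollary follows at once. The whole argument is purely formal once the single relation $B_J^N g^\omega = B_J^N$ is in hand, so the conceptual content is entirely concentrated in that one identity.
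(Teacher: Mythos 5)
Your plan has a genuine gap, and it is not a repairable detail: the key identity you propose, $B_J^N g^\omega = B_J^N$ for every generator $g$ with $C(g) \npreceq J$, is false. Take the paper's own appendix example, the free left regular band on $\{a,b\}$, with $J = \emptyset$, so that both generators are bad, $g^\omega = g$, and $B_\emptyset = (1-a)(1-b) = 1-a-b+ab$ is already idempotent (so $N=1$ witnesses the corollary). Then $B_\emptyset\, a = a - a^2 - ba + aba = ab - ba \neq B_\emptyset$. The underlying issue is that Lemma \ref{lemma:yoBJ = 0} is a statement about multiplication on the \emph{left} ($y^\omega B_J^N = 0$); it says nothing about $B_J^N y^\omega$, and in a noncommutative $R$-trivial monoid the two sides behave very differently. (A second error in the same direction: not every term of $B_J^N$ ends in a bad generator power, since the identity $1$ is always a term of $B_J^N$, with coefficient $1$ -- no product $g_{i_1}^\omega \cdots g_{i_k}^\omega$ of bad generator powers can equal $1$, by Lemma \ref{rtrivial}.) Your argument is also internally inconsistent: if $B_J^N(1-g^\omega) = 0$ held for each factor, then inserting just the first factor would give $B_J^N B_J = \bigl(B_J^N(1-g_1^\omega)\bigr)(1-g_2^\omega)\cdots(1-g_m^\omega) = 0$, not $B_J^N$; the two conclusions are compatible only if $B_J^N = 0$, which fails (the coefficient of $1$ in $B_J^N$ is $1$, and in the example above $B_\emptyset \neq 0$).

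The paper's proof instead puts the extra factor of $B_J$ on the side where Lemma \ref{lemma:yoBJ = 0} applies: write $B_J^{N+1} - B_J^N = (B_J - 1)B_J^N$ and note that every term of $B_J - 1$ is a scalar multiple of a product $g_{i_1}^\omega \cdots g_{i_k}^\omega$ with $k \geq 1$ and all $C(g_{i_j}) \npreceq J$. Since the \emph{rightmost} factor of such a term is a bad $g_{i_k}^\omega$, the lemma gives $g_{i_1}^\omega \cdots g_{i_{k-1}}^\omega \bigl(g_{i_k}^\omega B_J^N\bigr) = 0$ term by term, hence $(B_J-1)B_J^N = 0$. This is a one-line deduction requiring no new cancellation argument. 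Since powers of $B_J$ commute with one another, this identity also yields $B_J^N B_J = B_J^N$ -- but only as a global statement; it does not decompose factor by factor on the right, which is exactly what your plan requires.
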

\proof
By Lemma \ref{lemma:yoBJ = 0}, $(B_J-1)B_J^N = 0$ for a sufficiently large $N$ since every element of $B_J-1$ is of the form $\alpha y^{\omega}$ where $\alpha \in \mathbb{C}$, $y \in \calG$ and $C(y) \npreceq J$.
\endproof

\begin{remark}
Corollary \ref{cor:finiteN} is a special property of an $R$-trivial monoid, and is not true for a general monoid. For instance if an element $x$ of a semigroup $S$ generates a finite cyclic group of order $2$, then $(1-x)^k = 2^{k-1} - 2^{k-1}x$, so $(1-x)^{k+1} \neq (1-x)^k$ for all $k$. 
\end{remark}

This now allows us to define $A_J = B_J^\omega$.

\begin{lemma}\label{lemma:AT}
Let $J \in \calL$. Then:
\begin{enumerate}
\item\label{Tx}
$T_J x= T_J $ for all $x$ such that $C(x) \preceq J$; 
\item\label{yA} $y^\omega A_J = 0$ for all $y$ such that $C(y) \not \preceq J$ and $y \in \mathcal{G}$.
\end{enumerate}
\end{lemma}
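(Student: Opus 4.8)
The plan is to handle the two parts separately. Part (\ref{yA}) will follow almost immediately from the two results just proved, so the real content is part (\ref{Tx}), which I would reduce to the case of a single generator and then settle by repeated application of Lemma \ref{rtrivial}.

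For part (\ref{Tx}), write $T_J = P^{\omega}$, where $P = \prod_{g \in \calG,\, C(g) \preceq J} g^{\omega} = h_1^{\omega} h_2^{\omega} \cdots h_r^{\omega}$ lists the relevant generator-powers in the fixed order. First I would peel the factors off $T_J$ one at a time: since $T_J = P^{\omega}$, Lemma \ref{lemma:xox=xo} gives $T_J P = T_J$, that is, $T_J h_1^{\omega} \cdots h_r^{\omega} = T_J$, and then the consequential statement of Lemma \ref{rtrivial} yields $T_J h_i^{\omega} = T_J$ for every $i$. In particular, for a single generator $g$ with $C(g) \preceq J$ we have $g = h_j$ for some $j$, so $T_J g^{\omega} = T_J$; writing $g^{\omega} = g \cdots g$ and invoking Lemma \ref{rtrivial} once more upgrades this to the cleaner identity $T_J g = T_J$.

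To pass from generators to an arbitrary $x$ with $C(x) \preceq J$, I would factor $x = g_{i_1} \cdots g_{i_k}$ as a product of generators, which is possible since $\calG$ generates $S$. Because $C$ is a monoid morphism (Proposition \ref{propn:C is surjective morphism}), $C(x) = C(g_{i_1}) \vee \cdots \vee C(g_{i_k})$, and in an upper semi-lattice a join lies at or below $J$ precisely when each of its terms does; hence $C(g_{i_j}) \preceq J$ for every $j$. Applying the generator case repeatedly gives $T_J x = \big(T_J g_{i_1}\big) g_{i_2} \cdots g_{i_k} = T_J g_{i_2} \cdots g_{i_k} = \cdots = T_J$. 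For part (\ref{yA}), recall that $A_J = B_J^{\omega}$ is, by Corollary \ref{cor:finiteN}, equal to $B_J^{k}$ for all sufficiently large $k$, say $k \geq N_0$. Lemma \ref{lemma:yoBJ = 0} supplies an $N_1$ with $y^{\omega} B_J^{N_1} = 0$ for all $y \in \calG$ with $C(y) \npreceq J$, which forces $y^{\omega} B_J^{k} = \big(y^{\omega} B_J^{N_1}\big) B_J^{k - N_1} = 0$ for all $k \geq N_1$. Choosing $k \geq \max(N_0, N_1)$ then gives $y^{\omega} A_J = y^{\omega} B_J^{k} = 0$.

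The only genuinely delicate point is the reduction in part (\ref{Tx}): I must make sure the inference ``$C(x) \preceq J$ implies $C(g) \preceq J$ for each generator $g$ occurring in a factorization of $x$'' is legitimate, which rests on the morphism property of $C$ together with the elementary semi-lattice fact that $a \vee b \preceq J$ if and only if $a \preceq J$ and $b \preceq J$. Everything else is a mechanical application of Lemmas \ref{lemma:xox=xo} and \ref{rtrivial} and of Corollary \ref{cor:finiteN}, so I expect no obstacle beyond bookkeeping.
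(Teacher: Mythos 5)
Your proof is correct, and while part (\ref{yA}) coincides with the paper's argument (the paper likewise just invokes Lemma \ref{lemma:yoBJ = 0} together with $A_J = B_J^N$ for $N$ large), your part (\ref{Tx}) takes a genuinely different route. The paper's proof is two lines: it asserts $J = C(T_J)$, so that $C(x) \preceq J$ means $C(x) \supseteq C(T_J)$ under the reverse-inclusion order, and then uses the fixed-point characterization $C(x) = \{a \in S : ax = a\}$ of Proposition \ref{propn:C(x)=aInSstax=a}: since $T_J$ is idempotent, $T_J \in C(T_J) \subseteq C(x)$, whence $T_J x = T_J$. You instead never compute $C(T_J)$: you peel the factors $h_i^\omega$ off $T_J$ with Lemma \ref{lemma:xox=xo} and Lemma \ref{rtrivial} to get $T_J g = T_J$ for every generator $g$ with $C(g) \preceq J$, then handle arbitrary $x$ by factoring into generators and using the morphism property of $C$ together with the semilattice fact that a join is $\preceq J$ exactly when every joinand is. Your version is longer but more self-contained: the paper's identity $J = C(T_J)$ is left unjustified, and verifying it requires the surjectivity of $C$ and essentially the same semilattice observation you make explicit (one must see that the join of all generator contents lying below $J$ equals $J$, not merely lies below it). The paper's proof buys brevity by reusing Proposition \ref{propn:C(x)=aInSstax=a}; yours buys transparency, working entirely at the level of the monoid identities in Lemmas \ref{lemma:xox=xo} and \ref{rtrivial} and Corollary \ref{cor:finiteN}.
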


\proof
Since $J = C(T_J)$, $C(x) \preceq J$ implies $C(x) \supseteq C(T_J)$.  We also know that $T_J \in C(T_J)$ because $T_J$ is idempotent.  So $T_J \in C(x)$, that is, $T_J x = T_J$.

The second part follows from Lemma \ref{lemma:yoBJ = 0} since $A_J = B_J^N$.
\endproof

\begin{remark}\emph{ Although $T_J$ and $A_J$ are idempotents individually, their product, the Norton element $z_J$, need not be.  For example, take the 0-Hecke algebra $H_6(0)$ corresponding to the symmetric group $\mathfrak{S}_6$. Let $J$ be the subset $\{1,4,5\}$ of $\{1,2,3,4,5\}$. Then  $T_J = T_1T_4T_5T_4$, $A_J = (1-T_2)(1-T_3)(1-T_2)$ and $z_J$ is their product. No power of $z_J$ is idempotent.
} \end{remark}


\begin{lemma}\label{z!=0}
The coefficient of $T_J$ in $z_J = A_J T_J$ is 1. All other terms $y$ in $z_J$ have $C(y) \succ J$.
\end{lemma}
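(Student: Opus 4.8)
The plan is to expand $z_J = A_J T_J$ and to track each resulting term through the content map $C$, exploiting that $C$ is a monoid morphism (Proposition \ref{propn:C is surjective morphism}) and that $C(T_J) = J$. The whole argument will be a content computation rather than a monomial-by-monomial bookkeeping.

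First I would analyze the terms of $A_J = B_J^N$. Expanding each factor $\prod_{g}(1-g^\omega)$ and multiplying the $N$ copies together, every term of $A_J$ is either the identity of $S$ or a nonempty product $g_{i_1}^\omega \cdots g_{i_l}^\omega$ of generators $g_{i_j} \in \calG$ with $C(g_{i_j}) \not\preceq J$. I would then check that the identity occurs with coefficient exactly $1$. The key point is that no such nonempty product can equal the identity: applying $C$ and using $C(g^\omega) = C(g)$ gives $C(g_{i_1}^\omega \cdots g_{i_l}^\omega) = C(g_{i_1}) \vee \cdots \vee C(g_{i_l}) \succeq C(g_{i_1})$, which is strictly above the bottom element $C(1)$ (indeed $C(1) \preceq J$ while $C(g_{i_1}) \not\preceq J$), whereas the identity has content $C(1)$. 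Hence the only contribution to the identity in $B_J^N$ is the constant term $1^N = 1$.

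Next, writing $z_J = T_J + \sum_{w \neq 1} \beta_w\, w T_J$, where the sum runs over the non-identity terms $w$ of $A_J$, I would compute the content of each term $c = w T_J$. Since $C$ is a morphism, $C(c) = C(w) \vee C(T_J) = C(w) \vee J$. For $w = g_{i_1}^\omega \cdots g_{i_l}^\omega$ with $l \geq 1$ we have $C(w) = \bigvee_j C(g_{i_j})$, and because $C(g_{i_1}) \not\preceq J$ the join $C(g_{i_1}) \vee J$ is strictly above $J$; therefore $C(c) \succeq C(g_{i_1}) \vee J \succ J$. In particular $C(c) \neq J = C(T_J)$, so $c \neq T_J$.

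Finally I would combine these observations to get both claims. Since every term arising from a non-identity $w$ has content $\succ J$ while $C(T_J) = J$, no such term equals $T_J$; consequently the only way to produce the element $T_J$ is from the identity term of $A_J$, so the coefficient of $T_J$ in $z_J$ equals the coefficient of the identity in $A_J$, namely $1$. Moreover, any surviving term $y \neq T_J$ of $z_J$ (possibly obtained after several $w T_J$ collapse or partially cancel) comes only from non-identity $w$'s, all of which have content $\succ J$, whence $C(y) \succ J$. The step requiring the most care is the coefficient count: distinct products $w$ may collapse to the same element of $S$ or cancel, so rather than tracking individual monomials I must argue via $C$ that the element $T_J$ itself can be produced \emph{only} by $w = 1$, which is exactly what the content computation above guarantees.
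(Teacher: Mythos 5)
Your proposal is correct and follows essentially the same route as the paper's own proof: every term of $z_J$ is $aT_J$ for a term $a$ of $A_J$, and for $a \neq 1$ the morphism property gives $C(aT_J) = C(a) \vee J \succ J$, so only the identity term of $A_J$ can produce $T_J$. The only difference is that you explicitly justify two points the paper leaves implicit (that the identity occurs in $A_J$ with coefficient exactly $1$, and that non-identity terms of $A_J$ have content $\not\preceq J$), which is a welcome bit of extra care rather than a different argument.
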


\proof
The coefficient of the identity element $1$ in $A_J$ is 1. Each term of $A_JT_J$ is of the form $aT_J$ for a term $a$ of $A_J$. If $a \neq 1$, then $C(a) \npreceq J$ so $C(aT_J) = C(a) \vee C(T_J) \succ C(T_J) = J$. Hence the coefficient of $T_J$ in $A_JT_J$ is 1 and all other terms have content greater than $J$.
\endproof

\begin{lemma}\label{lemma:z-ortho}
If $J \not \preceq K$ then $z_J z_K = 0$.
\end{lemma}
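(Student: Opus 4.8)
The plan is to prove the stronger statement that the ``crossing'' product $T_J A_K$ already vanishes, and then deduce the lemma for free: by associativity,
$z_J z_K = A_J T_J A_K T_K = A_J\,(T_J A_K)\,T_K = 0$.
So the whole problem reduces to showing $T_J A_K = 0$, and no further information about the outer factors $A_J$ and $T_K$ is needed.

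To kill $T_J A_K$ I would produce a single generator $g_0 \in \mathcal{G}$ that simultaneously fixes $T_J$ on the right and annihilates $A_K$ on the left; concretely, a $g_0$ with $C(g_0) \preceq J$ but $C(g_0) \not\preceq K$. Granting such a $g_0$, Lemma \ref{lemma:AT}\,(\ref{Tx}) applies with $x = g_0^\omega$ (note $C(g_0^\omega) = C(g_0) \preceq J$, since $g_0^\omega$ is a power of $g_0$ and the join is idempotent) to give $T_J g_0^\omega = T_J$, while Lemma \ref{lemma:AT}\,(\ref{yA}), read with the index $K$ in place of $J$, gives $g_0^\omega A_K = 0$ because $g_0 \in \mathcal{G}$ and $C(g_0) \not\preceq K$. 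Inserting the idempotent $g_0^\omega$ between the two factors then yields $T_J A_K = (T_J g_0^\omega) A_K = T_J (g_0^\omega A_K) = 0$, as desired.

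The one nontrivial point, and the step I expect to be the main obstacle, is establishing the existence of $g_0$; everything else is a one-line manipulation. Here I would exploit the identity $C(T_J) = J$ together with the fact that $C$ is a join-morphism (Proposition \ref{propn:C is surjective morphism}). Since $C(g^\omega) = C(g)$, we have $C(T_J) = \bigvee_{g \in \mathcal{G},\, C(g) \preceq J} C(g)$, and I would first confirm this join is exactly $J$: it is visibly $\preceq J$, and writing $J = Se = C(e)$ for an idempotent $e$ and expressing $e$ as a product of generators — each of content $\preceq C(e) = J$, hence each appearing in the product defining $T_J$ — shows $J \preceq C(T_J)$ as well. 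With $C(T_J) = J$ in hand, suppose toward a contradiction that no such $g_0$ exists, i.e. every generator $g$ with $C(g) \preceq J$ also satisfies $C(g) \preceq K$. Taking the join over all such generators would give $J = C(T_J) = \bigvee_{g} C(g) \preceq K$, contradicting the hypothesis $J \not\preceq K$ (this also covers the degenerate case where the indexing set is empty, forcing $J$ to be the bottom of $\mathcal{L}$ and hence $\preceq K$). Therefore $g_0$ exists and the argument closes.
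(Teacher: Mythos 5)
Your proof is correct and follows essentially the same route as the paper: both arguments produce a generator $g_0 \in \mathcal{G}$ with $C(g_0) \preceq J$ but $C(g_0) \not\preceq K$, then insert $g_0^\omega$ between $T_J$ and $A_K$ and apply the two parts of Lemma \ref{lemma:AT} to get $z_J z_K = A_J (T_J g_0^\omega) A_K T_K = A_J T_J (g_0^\omega A_K) T_K = 0$. The only difference is that you carefully justify the existence of $g_0$ via $C(T_J) = J$ and the join-morphism property of $C$, a point the paper simply asserts without proof.
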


\proof
Since $J \not \preceq K$, there exists a $g \in \mathcal{G}$ with $C(g) \preceq J$ but $C(g) \not \preceq K$. Then, using Lemma \ref{lemma:AT} (\ref{Tx}) and Lemma \ref{lemma:AT} (\ref{yA}),
$z_Jz_K = A_JT_JA_KT_K =
A_J(T_J g^\omega) A_K T_K =
A_JT_J (g^\omega A_K) T_K = 0.
$
\endproof

\begin{lemma}\label{lemma:eventuallyzero}
For all $J \in \calL$, there exists an $N$ such that $ \left( 1- z_J \right)^N z_J^2= 0$.
\end{lemma}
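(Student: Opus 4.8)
The plan is to pass to the right regular representation and read off the Jordan structure of the operator ``right multiplication by $z_J$''. Write $R\colon \bbC S \to \mathrm{End}_{\bbC}(\bbC S)$ for $R_a(v) = va$; since $\bbC S$ is finite dimensional and $R$ is faithful (as $R_a(1)=a$), a polynomial $p$ kills the element $z_J$ if and only if $p(R_{z_J})=0$, so the minimal polynomial of $z_J$ equals that of the operator $R_{z_J}$. Because $1-z_J$ and $z_J$ commute, the assertion $(1-z_J)^N z_J^2 = 0$ is precisely the statement that this minimal polynomial divides $t^2(t-1)^N$ for some $N$. Thus it suffices to establish two facts about $R_{z_J}$: (a) every eigenvalue lies in $\{0,1\}$; and (b) the nilpotency index of $R_{z_J}$ at the eigenvalue $0$ is at most $2$. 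Granting (a) and (b), the minimal polynomial divides $t^2(t-1)^N$ for $N$ at least the nilpotency index at $1$, and evaluating at $z_J$ yields the lemma.

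For (a) I would exploit the partial order $\leq$. Right multiplication by any element of $S$ can only move up in this order, so for every $y \in S$ the element $y z_J = y A_J T_J$ is a combination of terms $y' \geq y$; hence $R_{z_J}$ is triangular with respect to any linear extension of $(S,\leq)$, and its eigenvalues are the diagonal entries, i.e.\ the coefficient $\lambda_y$ of $y$ in $y z_J$. Using Lemma~\ref{lemma:increasing} (as in its proof) to compute the coefficient of $y$ in $y A_J$, and the absorption property of $T_J$ coming from Lemma~\ref{lemma:AT}, one finds $\lambda_y$ is the product of an indicator in $\{0,1\}$ (whether no generator $g$ with $C(g)\npreceq J$ fixes $y$ on the right) with the indicator of $yT_J=y$. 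In particular $\lambda_y \in \{0,1\}$, giving (a).

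The main obstacle is (b), the bound of $2$ on the nilpotency index at $0$; this is exactly what forces the exponent $2$ on $z_J$, since one does not expect $(1-z_J)^N z_J=0$ to hold in general. Here I would use that $R_{z_J}$ is the composite $R_{T_J}\circ R_{A_J}$ of two \emph{idempotent} operators, as $T_J^2=T_J$ and $A_J^2=A_J$. For a pair of idempotents $P,Q$ on a finite dimensional space, the classical structure theorem for two idempotents (equivalently, the fact that the indecomposable modules over the algebra generated by two idempotents have dimension at most $2$) decomposes the space into subspaces invariant under both $P$ and $Q$, each of dimension at most $2$. On each such piece $QP$ acts as a scalar or as a rank-$\le 1$ operator on a plane, so all of its Jordan blocks have size at most $2$; hence the nilpotency index of $R_{z_J}=R_{T_J}R_{A_J}$ at $0$ is at most $2$, which is (b).

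Assembling the pieces, (a) localizes the spectrum of $R_{z_J}$ in $\{0,1\}$ while (b) controls the $0$-block, so the minimal polynomial of $z_J$ divides $t^2(t-1)^N$, which is the desired identity. The delicate point is invoking the two-idempotent normal form cleanly; I note for completeness that the same index bound can alternatively be extracted by a Fitting-decomposition argument applied to the compression $R_{A_J}R_{T_J}R_{A_J}$ on the image of $R_{A_J}$, should one prefer to avoid citing the structure theorem. It is worth stressing that neither the order filtration nor the content filtration (via Lemma~\ref{z!=0}) alone suffices for (b): each only yields a nilpotency bound of the order of the longest chain in $\calL$, and it is genuinely the idempotent factorization $z_J=A_JT_J$ that collapses this to $2$.
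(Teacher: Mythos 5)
Your reduction of the lemma to two spectral facts about $R_{z_J}$ is sound, and your argument for claim (a) is essentially correct: it is the inclusion--exclusion count from Lemma \ref{lemma:increasing} combined with Lemma \ref{rtrivial}, applied to the diagonal of a triangular matrix. The genuine gap is claim (b). There is no structure theorem asserting that indecomposable modules over the algebra generated by two arbitrary idempotents have dimension at most $2$; that statement is Halmos' two-projections theorem, and it requires the idempotents to be \emph{orthogonal} projections (self-adjoint with respect to an inner product), which $R_{A_J}$ and $R_{T_J}$ are not in any natural way. For general idempotents the claim is false, and so is the conclusion you draw from it. Concretely, for any square matrix $X$ the block matrices
\begin{displaymath}
P = \begin{pmatrix} I & 0 \\ 0 & 0 \end{pmatrix},
\qquad
Q = \begin{pmatrix} X & X - X^2 \\ I & I - X \end{pmatrix}
\end{displaymath}
are both idempotent, and an easy induction gives $(QP)^{k} = \left(\begin{smallmatrix} X^{k} & 0 \\ X^{k-1} & 0 \end{smallmatrix}\right)$ for all $k \geq 1$. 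Taking $X$ to be a nilpotent Jordan block of size $2$ produces two idempotents on $\mathbb{C}^4$ with $(QP)^2 \neq 0$ but $(QP)^3 = 0$, and larger Jordan blocks make the nilpotency index of $QP$ at the eigenvalue $0$ arbitrarily large. (This reflects the fact that two generic idempotents generate a copy of the group algebra of the infinite dihedral group $\mathbb{Z}/2 * \mathbb{Z}/2$, whose finite-dimensional indecomposable modules have unbounded dimension.) The same counterexample rules out your proposed Fitting-decomposition variant, or any other argument that uses only the idempotency of $A_J$ and $T_J$.

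The exponent $2$ in the lemma is therefore not a generic feature of products of two idempotents; it comes from an identity special to this situation. Writing $T = T_J$, $A = A_J$, $z = AT$, one has $(1-z)^k z^2 = A\bigl(T(1-A)T\bigr)^k AT$, and $1-A$ is a linear combination of elements $g_1^\omega \cdots g_k^\omega$ with $k \geq 1$ and $C(g_i) \not\preceq J$, each of which satisfies $g_1^\omega \cdots g_k^\omega A = 0$ by Lemma \ref{lemma:AT}\,(\ref{yA}). It is this absorption identity, sandwiched between the two outer factors of $A$, together with the increasing-chain argument of Lemma \ref{lemma:increasing}, that the paper uses to prove $\bigl(T(1-A)T\bigr)^N A = 0$ once $N$ exceeds the length of the longest chain in $(S,\leq)$. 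In particular your closing remark has it backwards: the order filtration on $(S,\leq)$ \emph{does} suffice, but it controls the exponent $N$ at the eigenvalue $1$, while the exponent $2$ at the eigenvalue $0$ is produced by the identity $aA = 0$ for the terms $a \neq 1$ of $1-A$, not by any two-idempotent normal form. To repair your proof of (b) you would need to import exactly this ingredient.
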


\begin{proof}
To simplify the notation, let us temporarily set $T=T_J$, $A=A_J$ and $z=z_J=AT$.
We first note that for any integer $k \geq 0$,
\begin{align*}
(1-z)^kz^2 
&= z (1-z)^k z\\
& =AT(1-AT)^kAT\\
& = A(T(1 - A)T)^kAT.
\end{align*}
We will show that $ (T(1 - A)T)^NA = 0$ for $N>\ell$, 
where $\ell$ is the length of the longest chain in the poset $(S,\leq)$.

Let us write $1-A = \sum_{a\in S} c_a a$ where each term has $c_a \neq 0$ only if $a = g_1^\omega \cdots g_k^\omega$ with $C(g_i) \not \preceq J$ for all $i$. Therefore 
\begin{displaymath}
T(1-A)T = \sum_{a \in S} c_a TaT
        = \sum_{a \in S \atop TaT = Ta} c_a Ta \ \  
        + \sum_{a \in S \atop TaT \neq Ta} c_a TaT.
\end{displaymath}
Note that $c_1 = 0$ since $1$ is not a term of $(1-A)$.
If $TaT = Ta$, then we have
\begin{displaymath}
TaT \cdot (T(1-A)T) = Ta(1-A)T = Ta - TaAT = Ta
\end{displaymath}
since $aA = 0$ by Lemma \ref{lemma:AT}. Thus,
\begin{align*}
(T(1-A)T)^N  
        &= \left(\sum_{a_1 \in S \atop Ta_1T = Ta_1} c_{a_1} Ta_1
        + \sum_{a_1 \in S \atop Ta_1T \neq Ta_1} c_{a_1} Ta_1T\right) (T(1-A)T)^{N-1} \\
        &= \sum_{a_1 \in S \atop Ta_1T = Ta_1} c_{a_1} Ta_1
        + \left(\sum_{a_1 \in S \atop Ta_1T \neq Ta_1} c_{a_1} Ta_1T\right) (T(1-A)T)^{N-1}.
\end{align*}
Next, rewrite the second summand above using the same argument:
\begin{align*}
& \left(\sum_{a_1 \in S \atop Ta_1T \neq Ta_1} c_{a_1} Ta_1T\right) (T(1-A)T)^{N-1} \\
=& \left(\sum_{a_1 \in S \atop Ta_1T \neq Ta_1} c_{a_1} Ta_1T\right) 
  \left(\sum_{a_2 \in S} c_{a_2} Ta_2T\right) (T(1-A)T)^{N-2} \\
=& \left(\sum_{a_1,a_2 \in S \atop Ta_1T \neq Ta_1} c_{a_1} c_{a_2} Ta_1Ta_2T \right)
  (T(1-A)T)^{N-2} \\
=& \sum_{Ta_1T \neq Ta_1 \atop Ta_1Ta_2T = Ta_1Ta_2} c_{a_1} c_{a_2} Ta_1Ta_2 \\
   & \qquad\qquad + \left(\sum_{Ta_1T \neq Ta_1 \atop Ta_1Ta_2T \neq Ta_1Ta_2} c_{a_1} c_{a_2} Ta_1Ta_2T \right)
  (T(1-A)T)^{N-2}.
\end{align*}
Continuing in this way, we can write $(T(1-A)T)^N$ in the form
\begin{align*}
(T(1-A)T)^N = 
\left(\sum_{} c_{a_1} Ta_1
+ \cdots +
\sum_{} c_{a_1}\cdots c_{a_N} Ta_1\cdots Ta_N\right) \qquad
\\
+
\sum_{Ta_1\cdots Ta_iT \neq Ta_1\cdots Ta_i \atop 1\leq i \leq N} c_{a_1}\cdots c_{a_N} Ta_1\cdots Ta_NT.
\end{align*}


By Lemma \ref{lemma:AT}, we have $a_i A = 0$ for all terms $a_i$ in $1-A$, and so
\begin{align*}
(T(1-A)T)^N \cdot A = 
\left(\sum_{Ta_1\cdots Ta_iT \neq Ta_1\cdots Ta_i \atop 1\leq i \leq N} c_{a_1}\cdots c_{a_N} Ta_1\cdots Ta_NT\right) A.
\end{align*}
This summation is $0$ as it ranges over an empty set: indeed, if it
is not empty, we would have an increasing chain of length $N > \ell$, namely

\begin{displaymath}
	Ta_1 \ < \ Ta_1Ta_2 \ < \ Ta_1Ta_2Ta_3 \ < \ \cdots \ < \ Ta_1Ta_2 \cdots Ta_N,
\end{displaymath}

Therefore, $(T(1-A)T)^NA = 0$.
\end{proof}

\begin{defn}
Let $J \in \calL$. Let 
\begin{displaymath}
P_J :=  
\sum_{n,m\geq 0} \left( 1 - z_J \right)^{n+m} z_J^2
\ = \
\sum_{k\geq 0} (k+1) \left( 1 - z_J \right)^{k} z_J^2.
\end{displaymath}
(In Remark \ref{PJsummationfree} we establish a summation-free formula for $P_J$.)
\end{defn}

\begin{remark}\emph{
Lemma \ref{lemma:eventuallyzero} shows there are only finitely many terms
in the summation of $P_J$.
Therefore $P_J$ is a well defined element of $\bbC S$ for
each $J \in \calL$.
} \end{remark}

\begin{remark}\emph{\label{jtriv}
A monoid $S$ is called $J$-trivial if $SxS = SyS$ implies $x=y$ for all $x,y \in S$. When $S$ is $J$-trivial it suffices to define \[P_K = \ \sum_{n\geq 0} ( 1 - z_K)^{n}  z_K.\] 
} \end{remark}

\begin{lemma}\label{p!=0}
The coefficient of $T_J$ in $P_J$ is $1$ and all other terms $y$ of $P_J$ have $C(y) \succ J$.
\end{lemma}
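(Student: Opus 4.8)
The plan is to work modulo the subspace spanned by the monoid elements whose content lies strictly above $J$. Define
\[
I := \mathrm{span}_{\bbC}\{\, w \in S : C(w) \succ J \,\} \subseteq \bbC S.
\]
First I would check that $I$ is a two-sided ideal of $\bbC S$. Since $C$ is a monoid morphism (Proposition \ref{propn:C is surjective morphism}), for any $w$ with $C(w) \succ J$ and any $s \in S$ we have $C(sw) = C(s) \vee C(w) \succeq C(w) \succ J$, and likewise $C(ws) \succeq C(w) \succ J$; as $\succ$ is strict this forces $C(sw) \succ J$ and $C(ws) \succ J$, so $sw, ws \in I$. Hence $I$ is closed under left and right multiplication by $S$, and therefore by all of $\bbC S$.

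By Lemma \ref{z!=0}, the coefficient of $T_J$ in $z_J$ is $1$ and every other term has content $\succ J$; equivalently $z_J \equiv T_J \pmod{I}$. Because $I$ is an ideal, this single congruence propagates through products, so $z_J^n \equiv T_J^{\,n} = T_J \pmod{I}$ for every $n \geq 1$ (recall $T_J$ is idempotent by its construction as an $\omega$-power). In particular $z_J^2 \equiv T_J$ and $(1-z_J)^k \equiv (1-T_J)^k$.

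Next I would reduce $P_J$ itself modulo $I$. By Lemma \ref{lemma:eventuallyzero} the defining sum for $P_J$ has only finitely many nonzero summands, so $P_J$ is a genuine element of $\bbC S$ and the congruence applies termwise:
\[
P_J = \sum_{k\geq 0}(k+1)(1-z_J)^k z_J^2 \equiv \sum_{k\geq 0}(k+1)(1-T_J)^k T_J \pmod{I}.
\]
Using $T_J^2 = T_J$ gives $(1-T_J)T_J = 0$, hence $(1-T_J)^k T_J = 0$ for every $k \geq 1$, and the right-hand sum collapses to its $k=0$ term, namely $T_J$. Thus $P_J \equiv T_J \pmod{I}$, that is, $P_J - T_J \in I$.

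Finally I would read off the statement. Since $J \not\succ J$, we have $C(T_J) = J$ and so $T_J \notin I$, which means the coefficient of $T_J$ in any element of $I$ is $0$. Writing $P_J = T_J + (P_J - T_J)$ with $P_J - T_J \in I$, the coefficient of $T_J$ in $P_J$ is therefore exactly $1$, and every term $y$ of $P_J$ other than $T_J$ occurs in $P_J - T_J$ and hence satisfies $C(y) \succ J$. The only point needing care is the verification that $I$ is a two-sided ideal, as this is precisely what allows the lone congruence $z_J \equiv T_J$ to pass through all the powers and products in the definition of $P_J$; once that is established, the remaining computation is immediate.
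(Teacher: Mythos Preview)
Your proof is correct and follows essentially the same idea as the paper's: both rest on Lemma~\ref{z!=0} together with the fact that monoid elements of content $\succ J$ are closed under multiplication, and conclude that $P_J - T_J$ lies in the span of such elements. Your packaging via the two-sided ideal $I$ and the quotient computation $P_J \equiv T_J \pmod{I}$ is a cleaner formulation of what the paper does by a direct term-by-term analysis of $z_J^2(1-z_J)^{n+m}$, but the underlying argument is the same.
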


\proof
If $n+m > 0$ then, using that $T_J$ is idempotent, \[A_JT_JA_JT_J (1-A_JT_J)^{n+m} = A_JT_JA_J (T_J-T_JA_JT_J)^{n+m}.\]
Each term $x$ in $(T_J - T_JA_JT_J)^{n+m}$ has $C(x) \succ J$, so no $T_J$ appears in  $z_J^2 (1-z_J)^{n+m}$. The coefficient of $T_J$ in $z_J$ is $1$, by Lemma \ref{z!=0}. Hence $T_J$ appears in $z_J^2  (1-z_J)^0$ with coefficient $1$. By Lemma \ref{z!=0}, since all of the terms $y \neq T_J$ of $z_J$ have $C(y) \succ J$ and $P_J$ is a polynomial in $z_J$, all other terms $w$ of $P_J$ must have $C(w) \succ J$.
\endproof

\begin{remark}\emph{\label{polys} As polynomials in $x$ we have
for any nonnegative integer $N$:
\[ x\sum_{n=0}^N (1-x)^n  = 1-(1-x)^{N+1}.\]
} \end{remark}

\begin{propn}\label{lemma:p-idemp}
	For each $J \in \calL$, the element $P_J$ is idempotent.
\end{propn}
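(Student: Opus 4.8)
The plan is to work entirely inside the commutative subalgebra $\bbC[z_J] \subseteq \bbC S$ and to exhibit $P_J$ as the square of an explicit element, from which idempotency follows by a short computation. First I would fix, via Lemma \ref{lemma:eventuallyzero}, an integer $N$ with $(1-z_J)^N z_J^2 = 0$. Since $z_J^2$ and $(1-z_J)^N$ are both polynomials in $z_J$ they commute, so equivalently $z_J^2 (1-z_J)^N = 0$, and hence $z_J^2(1-z_J)^k = 0$ for every $k \geq N$. This is exactly what makes the defining sum finite, giving the closed form $P_J = \sum_{k=0}^{N-1}(k+1)(1-z_J)^k z_J^2$.

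The key step is the factorization $P_J = e^2$, where $e := 1 - (1-z_J)^N$. To establish it, set $h := \sum_{m=0}^{N-1}(1-z_J)^m$, so that Remark \ref{polys} gives $z_J h = 1 - (1-z_J)^N = e$ and therefore $e^2 = z_J^2 h^2$. Expanding the square yields $h^2 = \sum_{k \geq 0} a_k (1-z_J)^k$, where $a_k$ is the number of pairs $(m,n)$ with $0 \le m,n \le N-1$ and $m+n = k$; for $0 \le k \le N-1$ one has $a_k = k+1$. Multiplying by $z_J^2$ kills every term with $k \geq N$, so $e^2 = z_J^2 h^2 = \sum_{k=0}^{N-1}(k+1)(1-z_J)^k z_J^2 = P_J$, as claimed. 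Conceptually, $h$ plays the role of $z_J^{-1}$ on the part of $\bbC S$ where $1-z_J$ is nilpotent, so $e = z_J h$ is the projection onto that part; the computation above is just a hands-on version of this.

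Finally I would deduce $P_J^2 = P_J$. Writing $P_J = e^2$, we have $P_J^2 - P_J = e^4 - e^2 = e^2(e-1)(e+1)$. Now $e - 1 = -(1-z_J)^N$ and $e^2 = z_J^2 h^2$, so
\[ e^2(e-1) = -z_J^2 h^2 (1-z_J)^N = -\,h^2\,\big(z_J^2 (1-z_J)^N\big) = 0, \]
using commutativity and our choice of $N$. Hence $e^2(e-1)(e+1) = 0$, that is, $P_J^2 = P_J$.

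I expect the main obstacle to be the factorization $P_J = (1-(1-z_J)^N)^2$: one must check that the coefficients $k+1$ emerge correctly from squaring $h$, and justify the manipulations by observing that the relevant geometric-type sums are legitimate only after multiplication by $z_J^2$, where the high-degree terms vanish. Everything else is routine once this identity and the commutativity of $\bbC[z_J]$ are in hand.
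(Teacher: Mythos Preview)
Your argument is correct and follows essentially the same strategy as the paper: work in the commutative subalgebra $\bbC[z_J]$, use Lemma~\ref{lemma:eventuallyzero} to get $z_J^2(1-z_J)^N=0$, invoke the geometric-sum identity of Remark~\ref{polys}, and deduce idempotency by a short polynomial computation. The only difference is packaging: the paper rewrites $P_J$ as $1-(1-z_J)^{N+1}-(N+1)z_J(1-z_J)^{N+1}$, observes $z_J^2P_J=z_J^2$, and concludes $P_J^2=\big(\sum(1-z_J)^{n+m}z_J^2\big)P_J=P_J$; you instead obtain the neat factorization $P_J=\big(1-(1-z_J)^N\big)^2=e^2$ and check $e^2(e-1)=0$ directly. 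Your closed form is arguably cleaner and makes the projection interpretation visible, while the paper's version makes the identity $z_J^2P_J=z_J^2$ explicit; but the two computations are equivalent modulo the single relation $z_J^2(1-z_J)^N=0$.
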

\proof
Let $J\in \mathcal{L}$ be fixed and let $N$ be such that  $(1-z_J)^N z_J^2= 0$. 
Let us temporarily denote $z_J$ by $z$. We can use Lemma \ref{polys} to rewrite
$P_J$ as
\begin{align*} 
P_J 
&= \sum_{n, m \geq 0} z^2 (1 - z)^{n+m} = \sum_{n=0}^N \sum_{m=0}^{N-n} z^2 (1 - z)^{n+m} \\
&= \sum_{n=0}^N (1-z)^{n} \left(z^2 \sum_{m=0}^{N-n} (1 - z)^{m}\right) = \sum_{n=0}^N (1-z)^{n} \left(z - z(1 - z)^{N-n+1}\right) \\
&= z\left(\sum_{n=0}^N (1-z)^{n}\right) - (N+1) z(1 - z)^{N+1} = 1 - (1-z)^{N+1} - (N+1) z(1 - z)^{N+1}.
\end{align*}
This implies that $z^2P_J = z^2$ since $z^2(1-z)^{N+1}=0$, and so
\begin{gather*} 
P_J^2
= \left(\sum_{n=0}^N \sum_{m=0}^{N-n} (1 - z)^{n+m} z^2\right) P_J
= \sum_{n=0}^N \sum_{m=0}^{N-n} (1 - z)^{n+m} z^2
= P_J.
\end{gather*}
\endproof

\begin{remark}\emph{\label{PJsummationfree}
As shown in the calculation above, one could define $P_J$ as \[P_J = 1 - (1+(N+1)z_J)(1-z_J)^{N+1},\] where $N$ is the length of the longest chain in the monoid, or even $N = |S|$. For a $J$-trivial monoid, it suffices to take $P_J = 1 - (1-z_J)^{N+1}$.
} \end{remark}

\begin{lemma}\label{lemma:p-ortho}
	For all $J, K \in \calL$, with $J \not \preceq K$, {$P_J P_K = 0$.}  
\end{lemma}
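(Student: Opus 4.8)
The plan is to exploit the one-sided orthogonality already recorded in Lemma \ref{lemma:z-ortho}, namely that $J \not\preceq K$ forces $z_J z_K = 0$. The idea is that $P_J$ and $P_K$ are each polynomials (with no constant term) in the single elements $z_J$ and $z_K$ respectively, so I can isolate a factor of $z_J$ on the \emph{right} of $P_J$ and a factor of $z_K$ on the \emph{left} of $P_K$, and then let the two factors annihilate each other.

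First I would observe that $P_J$ carries $z_J^2$ as a right factor. From the defining expression $P_J = \sum_{k\geq 0}(k+1)(1-z_J)^k z_J^2$, every summand ends in $z_J^2$, so $P_J = M_J\, z_J^2$, where $M_J = \sum_{k\geq 0}(k+1)(1-z_J)^k$ is a polynomial in $z_J$ (a finite sum by Lemma \ref{lemma:eventuallyzero}, so it is a genuine element of $\bbC S$). Symmetrically, $P_K$ carries a factor of $z_K$ on the left: since $(1-z_K)^k$ commutes with $z_K$, each summand $(1-z_K)^k z_K^2$ equals $z_K\,(1-z_K)^k z_K$, whence $P_K = z_K\, N_K$ for the polynomial $N_K = \sum_{k\geq 0}(k+1)(1-z_K)^k z_K$ in $z_K$.

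Combining these factorizations gives $P_J P_K = M_J\, z_J^2\, z_K\, N_K$, and the middle piece now collapses: $z_J^2 z_K = z_J(z_J z_K) = z_J \cdot 0 = 0$, using Lemma \ref{lemma:z-ortho} together with the hypothesis $J \not\preceq K$. Therefore $P_J P_K = 0$.

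I do not expect a genuine obstacle here once the two complementary factorizations are spotted; the only points requiring care are bookkeeping rather than mathematical. One must note that all manipulations leading to the factorizations take place inside the commutative subalgebra generated by a single $z_J$ (respectively $z_K$), which is what legitimizes pulling $z_J^2$ to the right and $z_K$ to the left; and one must check that the asymmetric hypothesis $J \not\preceq K$ is precisely the form needed to apply Lemma \ref{lemma:z-ortho} as $z_J z_K = 0$ (rather than the reverse product).
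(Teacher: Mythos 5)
Your argument is, in substance, exactly the paper's proof: the paper's one-line justification is that $P_J$ is a polynomial in $z_J$ with no constant term (hence right-divisible by $z_J$) and $P_K$ likewise left-divisible by $z_K$, so that $z_Jz_K=0$ from Lemma \ref{lemma:z-ortho} kills the product. One slip worth fixing, though: your factors $M_J = \sum_{k\geq 0}(k+1)(1-z_J)^k$ and $N_K = \sum_{k\geq 0}(k+1)(1-z_K)^k z_K$ are \emph{not} well-defined elements of $\bbC S$, because Lemma \ref{lemma:eventuallyzero} annihilates $(1-z_J)^k z_J^2$ for large $k$, not $(1-z_J)^k$ or $(1-z_J)^k z_J$; for instance, if $z_J$ is a nontrivial idempotent then $(1-z_J)^k = 1-z_J \neq 0$ for every $k\geq 1$, so your $M_J$ has infinitely many nonzero terms. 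The repair is immediate: since the tail terms of $P_J$ itself vanish, first truncate its defining sum at the $N$ of Lemma \ref{lemma:eventuallyzero}, writing $P_J = \sum_{k=0}^{N}(k+1)(1-z_J)^k z_J^2$, and only then factor $z_J^2$ out of this \emph{finite} sum (and similarly factor $z_K$ out on the left of the truncated $P_K$); the rest of your argument then goes through verbatim.
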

\proof
Follows from Lemma \ref{lemma:z-ortho} and the fact that $P_J$ is a polynomial in $z_J$ with no constant term.
\endproof

\begin{defn}
	For each $J \in \calL$, let $$e_J :=  P_J \left( 1 - \sum_{K \succ J} e_K  \right).$$
\end{defn}

\begin{lemma}\label{e!=0}
$T_J$ occurs in $e_J$ with coefficient 1. All other terms $y$ of $e_J$ have $C(y) \succ J$. In particular, $e_J \neq 0$.
\end{lemma}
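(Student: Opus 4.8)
The plan is to argue by induction on the poset $(\calL,\preceq)$, proceeding downward from the maximal elements. Since the recursion defining $e_J$ refers only to the elements $e_K$ with $K \succ J$, and $\calL$ is finite, this induction is well-founded. The base case is a maximal $J$, where the sum $\sum_{K \succ J} e_K$ is empty and hence $e_J = P_J$; the claim is then exactly Lemma \ref{p!=0}, together with the fact (already used in the proof of Lemma \ref{lemma:AT}) that $C(T_J) = J$.

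For the inductive step, I would fix $J$ and assume the statement for every $K \succ J$, then write $e_J = P_J - \sum_{K \succ J} P_J e_K$. By the inductive hypothesis, every term $b$ of $e_K$ satisfies $C(b) \succeq K$, and since $K \succ J$ this gives $C(b) \succ J$. Next I would invoke that $C$ is a monoid morphism (Proposition \ref{propn:C is surjective morphism}): for any term $a$ of $P_J$ and any term $b$ of $e_K$, the product $ab$ has content $C(ab) = C(a) \vee C(b) \succeq C(b) \succ J$, and hence $C(ab) \succ J$. Thus every term of each $P_J e_K$, and therefore every term of the full sum $\sum_{K \succ J} P_J e_K$, has content strictly above $J$.

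It remains to combine this with Lemma \ref{p!=0}. In $P_J$ the coefficient of $T_J$ is $1$ and all other terms have content $\succ J$; in $\sum_{K \succ J} P_J e_K$ every term has content $\succ J$, so $T_J$ (whose content is exactly $J$) does not occur there. Subtracting, the coefficient of $T_J$ in $e_J$ is $1$ and every remaining term $y$ satisfies $C(y) \succ J$; in particular $e_J \neq 0$.

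The point requiring care — and the main obstacle — is that addition and subtraction in $\bbC S$ can only cancel terms, never create a monoid element of smaller content, so that a content bound established termwise on each summand survives in the whole sum; this is what guarantees that no term of content $J$ can reappear after the subtraction. The supporting inequality $C(a) \vee C(b) \succeq C(b)$ is immediate from $\vee$ being a least upper bound in the semi-lattice, so once the bookkeeping of terms is set up correctly, the argument is routine.
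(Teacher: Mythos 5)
Your proof is correct and follows essentially the same route as the paper's: downward induction on $\calL$, with the maximal case handled by Lemma \ref{p!=0} and the inductive step expanding $e_J = P_J\bigl(1-\sum_{K\succ J}e_K\bigr)$ and bounding the content of every term of $P_Je_K$ strictly above $J$. Your explicit appeal to the morphism property $C(ab)=C(a)\vee C(b)$ merely spells out a step the paper leaves implicit.
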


\proof
We proceed by induction. If $J$ is maximal, then $e_J = P_J$, so the statement is implied by Lemma \ref{p!=0}.

Now suppose the statement is true for all $M \succ J$. Then $e_J = P_J(1-\sum_{M \succ J}e_M)$. By induction, all terms $x$ of $e_M$ have $C(x) \succeq M \succ J$. So terms $y$ from $P_J e_M$  have $C(y) \succeq M \succ J$. The only other terms are those from $P_J$, for which the statement was proved in Lemma \ref{p!=0}.
\endproof

\begin{lemma}\label{lemma:ep}
$e_K P_J= 0$ for $K  \not\preceq J$.
\end{lemma}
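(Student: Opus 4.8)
The plan is to unfold the recursive definition $e_K = P_K\bigl(1 - \sum_{M \succ K} e_M\bigr)$ and argue by downward induction through the poset $\calL$, starting from its maximal elements. Multiplying this definition on the right by $P_J$ gives
\[
e_K P_J = P_K P_J - P_K \sum_{M \succ K} e_M P_J,
\]
so the task splits into controlling the two terms on the right-hand side. The first term is immediate: since $K \not\preceq J$, Lemma \ref{lemma:p-ortho} (with the roles of the two indices interchanged) yields $P_K P_J = 0$.

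The heart of the argument is the summation. The key combinatorial observation is that every $M$ with $M \succ K$ automatically satisfies $M \not\preceq J$: if instead $M \preceq J$, then from $K \prec M \preceq J$ and transitivity of $\preceq$ we would obtain $K \preceq J$, contradicting the hypothesis $K \not\preceq J$. Hence each index $M$ in the sum meets the hypothesis of the lemma being proved, so the inductive hypothesis applies and gives $e_M P_J = 0$ for every $M \succ K$. Consequently $P_K \sum_{M \succ K} e_M P_J = 0$, and combining with the first term we conclude $e_K P_J = 0$.

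To make this rigorous I would run the induction on $K$ from the top of $\calL$ downward (equivalently, on the length of the longest chain strictly above $K$). The base case is $K$ maximal, where the sum $\sum_{M \succ K} e_M$ is empty, so $e_K = P_K$ and $e_K P_J = P_K P_J = 0$ exactly as above. The step I would be most careful about — and the only real subtlety — is setting up the direction of the induction correctly and verifying the poset implication $M \succ K \Rightarrow M \not\preceq J$, which is precisely what licenses applying the inductive hypothesis with the same $J$ to each summand. Once these two points are in place, the result is a one-line expansion with no further obstacle.
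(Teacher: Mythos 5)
Your proof is correct and follows essentially the same route as the paper's: the same downward induction on $\calL$ with maximal elements as the base case, the same expansion $e_K P_J = P_K P_J - \sum_{M \succ K} P_K e_M P_J$, killing the first term by Lemma \ref{lemma:p-ortho} and the summands via the inductive hypothesis. Your explicit verification that $M \succ K$ and $K \not\preceq J$ force $M \not\preceq J$ is exactly the transitivity point the paper invokes, so nothing is missing.
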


\proof
The proof is by a downward induction on the semi-lattice. If $K$ is maximal, then $e_K = P_K$, so by Lemma \ref{lemma:p-ortho}, $e_K P_J= P_KP_J = 0$. 

Now suppose that for every $L\succ K$, $ e_L P_J= 0$ for $L \not\preceq J$, and we will show that $ e_K P_J=0$ for $K \not\preceq J$. We expand $e_KP_J$:
\begin{align*}
e_K P_J = P_K\left(1-\sum_{L \succ K} e_L\right)P_J = P_KP_J - \sum_{L\succ K} P_Ke_LP_J.
\end{align*}
Since $K \not\preceq J$, we have
$P_KP_J = 0$ by Lemma \ref{lemma:p-ortho}, and $e_LP_J=0$ by induction, since $L\succ K$ and $K \not \preceq J$ implies $L \not\preceq J$. 
\endproof

\begin{cor}\label{idem-e}
$e_J$ is idempotent.
\end{cor}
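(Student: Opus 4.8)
The plan is to show $e_J^2 = e_J$ by reducing everything to two facts already in hand: $P_J$ is idempotent (Proposition~\ref{lemma:p-idemp}) and $e_K P_J = 0$ whenever $K \not\preceq J$ (Lemma~\ref{lemma:ep}). First I would expand, using the definition $e_J = P_J\big(1 - \sum_{K \succ J} e_K\big)$, the product
\begin{displaymath}
e_J^2 = P_J\left(1 - \sum_{K \succ J} e_K\right) P_J \left(1 - \sum_{K \succ J} e_K\right).
\end{displaymath}
The key observation is that every $K$ appearing in the inner sum satisfies $K \succ J$, hence in particular $K \not\preceq J$, so Lemma~\ref{lemma:ep} gives $e_K P_J = 0$ for each such $K$. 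Therefore the middle factor collapses:
\begin{displaymath}
\left(1 - \sum_{K \succ J} e_K\right) P_J = P_J - \sum_{K \succ J} e_K P_J = P_J.
\end{displaymath}

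Substituting this back, the expression becomes $e_J^2 = P_J \cdot P_J \cdot \big(1 - \sum_{K \succ J} e_K\big)$. Now I would invoke idempotency of $P_J$ to replace $P_J P_J$ by $P_J$, yielding $e_J^2 = P_J\big(1 - \sum_{K \succ J} e_K\big) = e_J$, which is exactly the definition of $e_J$. The argument is short and essentially formal once the two cited results are available.

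The only point requiring care — and the step I would flag as the main (minor) obstacle — is verifying that Lemma~\ref{lemma:ep} genuinely applies to each summand. That lemma is stated for $K \not\preceq J$, and I must confirm that $K \succ J$ in the semi-lattice $(\calL, \preceq)$ indeed forces $K \not\preceq J$; this holds because $\preceq$ is a genuine partial order (antisymmetry), so $K \succ J$ means $K \succeq J$ with $K \neq J$, which is incompatible with $K \preceq J$. There is also a subtlety worth a sentence: the definition of $e_J$ is recursive downward through the semi-lattice, so this corollary is implicitly one more step in the induction organized around $\calL$; but since Lemma~\ref{lemma:ep} has already been proved (by its own downward induction) for all relevant $K$, no additional inductive bookkeeping is needed here. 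I would keep the write-up to the two displayed simplifications above.
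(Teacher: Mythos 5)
Your proof is correct and is essentially identical to the paper's: both expand $e_J^2$, kill the cross terms $e_K P_J$ for $K \succ J$ via Lemma~\ref{lemma:ep}, and then use the idempotency of $P_J$ (Proposition~\ref{lemma:p-idemp}) to conclude. Your extra remark that $K \succ J$ forces $K \not\preceq J$ by antisymmetry is a fine (if minor) point of care that the paper leaves implicit.
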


\proof
We expand $e_J e_J$:
\begin{align*} e_Je_J 
    &= P_J\left(1-\sum_{M\succ J}e_M\right)P_J\left(1-\sum_{M\succ J}e_M\right)
    = P_J\left(P_J-\sum_{M\succ J}e_MP_J\right)\left(1 - \sum_{M\succ J} e_M\right)\\
    &\buildrel{(1)}\over{=} P_J^2\left(1-\sum_{M \succ J} e_M\right)
    \buildrel{(2)}\over{=} P_J \left(1-\sum_{M \succ J} e_M\right)=e_J,
\end{align*}
where (1) follows from Lemma \ref{lemma:ep},
and (2) follows from Lemma \ref{lemma:p-idemp}.
\endproof

\begin{lemma}\label{ortho-e}
$e_Je_K = 0$ for $J\neq K$.
\end{lemma}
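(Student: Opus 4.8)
The plan is to reduce the claim to the two orthogonality facts already established, splitting into cases according to the partial order on $\calL$ and using a downward induction on $J$ only where it is genuinely needed.

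First I would dispose of the case $J \not\preceq K$. Lemma~\ref{lemma:ep} holds for every ordered pair of indices, so interchanging the roles of $J$ and $K$ gives $e_J P_K = 0$ whenever $J \not\preceq K$. Since $e_K = P_K\bigl(1 - \sum_{L \succ K} e_L\bigr)$ begins with the factor $P_K$, multiplying on the left by $e_J$ annihilates it:
\[
e_J e_K = (e_J P_K)\Bigl(1 - \sum_{L \succ K} e_L\Bigr) = 0.
\]
No induction is required here, and this case already covers $J$ maximal (for which $K \neq J$ forces $J \not\preceq K$), so it supplies the base of the induction below.

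The remaining case is $J \prec K$, so that $K \succ J$ and $e_K$ is one of the terms subtracted in the definition of $e_J$. Here I would argue by downward induction on $J$: assume $e_M e_{K'} = 0$ for every $M \succ J$ and every $K' \neq M$. Expanding,
\[
e_J e_K = P_J\Bigl(1 - \sum_{M \succ J} e_M\Bigr) e_K = P_J e_K - P_J \sum_{M \succ J} e_M e_K.
\]
In the sum the term $M = K$ contributes $e_K e_K = e_K$ by idempotency (Corollary~\ref{idem-e}), while every term with $M \neq K$ vanishes by the inductive hypothesis applied to the index $M \succ J$. Thus $\sum_{M \succ J} e_M e_K = e_K$, and the two contributions cancel: $e_J e_K = P_J e_K - P_J e_K = 0$.

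The substance of the argument is the bookkeeping of the induction rather than any hard computation, and that is where I expect the only real care to be needed. One must check that the instances $e_M e_K = 0$ used in the cancellation are genuinely instances of the statement for an index strictly above $J$ (they are, since $M \succ J$), so the downward induction is well-founded, and that $K$ really does occur among the subtracted $e_M$ (it does, precisely because $K \succ J$). Everything else reduces to Lemma~\ref{lemma:ep} and the idempotency of the $e_J$ already in hand.
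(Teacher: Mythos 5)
Your proof is correct and follows the same overall route as the paper's: a downward induction on $\calL$, the same case split according to whether $K \succ J$, and the identical cancellation $\sum_{M\succ J} e_M e_K = e_K$ (via Corollary \ref{idem-e} and the inductive hypothesis) in the case $K \succ J$. The one point of divergence is the case $J \not\preceq K$: the paper handles it inside the induction, using the inductive hypothesis to kill $\sum_{L\succ J} e_L e_K$ and then Lemma \ref{lemma:p-ortho} to get $P_J e_K = P_J P_K\bigl(1 - \sum_{M\succ K} e_M\bigr) = 0$, whereas you invoke Lemma \ref{lemma:ep} with the roles of the indices interchanged (giving $e_J P_K = 0$ for $J \not\preceq K$) and peel off the leading factor $P_K$ of $e_K$. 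That swap is legitimate, since Lemma \ref{lemma:ep} is quantified over all ordered pairs and is established before this point; what it buys you is that this case becomes induction-free, so the base case ($J$ maximal) is subsumed automatically rather than checked separately. The gain is organizational rather than substantive, as both arguments ultimately rest on the same prior results.
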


\proof
The proof is by downward induction on the lattice $\mathcal{L}$. For a maximal element $M \in \mathcal{L}$, $e_M = P_M$, so $e_M e_K = P_M P_K (1-\sum e_L) = 0$ by Lemma \ref{lemma:p-ortho}. Now suppose that for all $M\succ J$, $e_M e_K = 0$ for $M \neq K$ and we will show that $e_J e_K = 0$ for $J \neq K$. 
We expand $e_Je_K$:

\begin{equation}\label{eqn:ortho}
 e_Je_K = P_J(1-\sum_{L\succ J}e_L)e_K = P_J(e_K-\sum_{L\succ J}e_Le_K)
\end{equation}

If $K \not \succ  J$, then $\sum_{L\succ J}e_Le_K = 0 $ by our induction hypothesis, so $P_J(e_K-\sum_{L\succ J}e_Le_K) = P_J e_K = P_J P_K (1-\sum_{M\succ K}e_M) = 0$ by Lemma \ref{lemma:p-ortho}.

If $K \succ J$, then  $\sum_{L\succ J}e_Le_K = e_K$ since $e_K$ is idempotent and $e_L e_K = 0$ for $L \neq K$ by the inductive hypothesis. Therefore $e_K-\sum_{L\succ J}e_Le_K =0 $ and hence the right hand side of (\ref{eqn:ortho}) is zero.
\endproof

\begin{thm}
    The set $\{ e_J \, : \ J \in \calL \}$ is a complete system of primitive orthogonal idempotents for $\bbC S$.
\end{thm}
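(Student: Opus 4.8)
The four defining conditions split up, and two of them are already in hand: idempotency is Corollary~\ref{idem-e} and pairwise orthogonality is Lemma~\ref{ortho-e}. What remains is completeness ($\sum_{J} e_J = 1$) and primitivity. The plan is to route both through the one-dimensional representations of $\bbC S$ coming from the content map $C$. Since $C\colon S\to\calL$ is a monoid morphism into the commutative idempotent monoid $(\calL,\vee)$, each $K\in\calL$ yields an algebra homomorphism $\theta_K\colon\bbC S\to\bbC$ determined by $\theta_K(w)=1$ if $C(w)\preceq K$ and $\theta_K(w)=0$ otherwise; these are exactly the characters of the semilattice algebra $\bbC\calL$ pulled back along $C$.

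The heart of the argument is the computation $\theta_K(e_J)=\delta_{J,K}$. I would establish this in stages. First, $\theta_K(g^{\omega})=[\,C(g)\preceq K\,]$ for a generator $g$, so $\theta_K(A_J)\in\{0,1\}$ equals $1$ exactly when every generator below $K$ is already below $J$. Combining this with $\theta_K(T_J)=[\,J\preceq K\,]$ and the identity $K = C(T_K)=\bigvee_{g\in\calG,\,C(g)\preceq K} C(g)$ (which shows $K$ is determined by the set of generators lying below it) gives $\theta_K(z_J)=\delta_{J,K}$: if $J\preceq K$ but $J\neq K$ then some generator sits below $K$ but not below $J$, killing $\theta_K(A_J)$. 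Because $P_J$ is a polynomial $p(z_J)$ with $p(0)=0$ and $p(1)=1$ (the summation-free formula of Remark~\ref{PJsummationfree}), we get $\theta_K(P_J)=\delta_{J,K}$, and then a downward induction on $\calL$ applied to $e_J = P_J\bigl(1-\sum_{L\succ J} e_L\bigr)$ yields $\theta_K(e_J)=\delta_{J,K}$.

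Granting this, completeness follows quickly. The element $g = 1-\sum_{J} e_J$ is idempotent, being a sum of orthogonal idempotents, and $\theta_K(g)=1-\sum_J\delta_{J,K}=0$ for every $K$, so $g$ lies in the common kernel of all the $\theta_K$; an idempotent there must vanish, giving $g=0$. For primitivity I would then invoke the Remark following the definition of a complete system: once $\sum_J e_J=1$, the family $\{e_J\}$ is maximal among sets of nonzero orthogonal idempotents, since any $h$ orthogonal to all of them satisfies $h = h\sum_J e_J = 0$, and a maximal such family consists of primitive idempotents. Alternatively, primitivity is immediate from the character computation: if $e_J = x+y$ with $x,y$ orthogonal idempotents, applying all the $\theta_K$ sends $x$ and $y$ to orthogonal $0$--$1$ idempotents of $\prod_K\bbC$ whose supports are disjoint with union $\{J\}$, forcing one of them, say $y$, into the common kernel of the $\theta_K$ and hence to $0$.

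The one genuinely non-elementary input, and the step I expect to be the main obstacle, is the assertion that an idempotent annihilated by every $\theta_K$ is zero, equivalently that the characters $\theta_K$ ($K\in\calL$) already exhaust the irreducible representations of $\bbC S$, so that their common kernel is precisely the Jacobson radical and $\bbC S/\mathrm{rad}\cong\bbC^{\calL}$. This is the standard structural fact that the simple modules of a finite $R$-trivial monoid algebra are one-dimensional and indexed by $\calL$; I would either cite it or prove it separately. Everything else above is elementary and uses only the already-established leading-term lemmas (Lemmas~\ref{z!=0}, \ref{p!=0}, and \ref{e!=0}) together with the multiplicativity and surjectivity of $C$.
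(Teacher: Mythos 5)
Your proposal is correct, but it follows a genuinely different route from the paper's, and it is instructive to compare them. The paper's proof is a one-line counting argument: it cites Schocker \cite{S} for the fact that the maximal possible number of pairwise orthogonal nonzero idempotents in $\bbC S$ is $|\calL|$; since Lemma \ref{e!=0}, Corollary \ref{idem-e} and Lemma \ref{ortho-e} produce exactly $|\calL|$ nonzero pairwise orthogonal idempotents, the family is maximal, and the Remark following the definition of a complete system then gives completeness and primitivity for free. You instead work with the explicit characters $\theta_K$ and the computation $\theta_K(e_J)=\delta_{J,K}$, whose ingredients all check out: $C(T_J)=J$ (using surjectivity of $C$ and that $\calG$ generates $S$), hence $\theta_K(T_J)=[\,J\preceq K\,]$, the stated evaluation of $\theta_K(A_J)$, the polynomial identities $p(0)=0$ and $p(1)=1$ from Remark \ref{PJsummationfree}, and the downward induction on $\calL$. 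The structural fact you flag as the main obstacle --- that an idempotent annihilated by every $\theta_K$ must vanish, equivalently $\bigcap_K \ker\theta_K = \mathrm{rad}(\bbC S)$ and $\bbC S/\mathrm{rad}(\bbC S)\cong\bbC\calL$ --- is not a genuine gap: it is precisely the content of Schocker's theorem in \cite{S} (the radical of a weakly ordered semigroup algebra is the kernel of the map induced by $C$), i.e.\ the very same external input the paper's proof consumes, just packaged as a bound on the number of orthogonal idempotents rather than as a description of the semisimple quotient. So the two arguments rest on the same citation and neither is more self-contained; what yours buys in exchange for its extra length is finer information: it exhibits each $e_J$ as a lift of the primitive idempotent of $\bbC\calL$ indexed by $J$ (so $\bbC S e_J$ is the projective cover of the simple module $\theta_J$), and it renders Lemma \ref{e!=0} unnecessary, since $e_J\neq 0$ already follows from $\theta_J(e_J)=1$. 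If you wanted full self-containment, the needed fact also has a short direct proof in the $R$-trivial setting: ordering $S$ by a linear extension of $\leq$ makes the (faithful) right regular representation upper triangular, so $\bbC S$ modulo the nilpotent ideal of strictly upper triangular elements embeds in a product of copies of $\bbC$; hence every irreducible representation of $\bbC S$ is a character, every character takes values in $\{0,1\}$ on $S$ because $s^{\omega+1}=s^{\omega}$, and one then checks directly that each such character equals $\theta_K$ for $K$ the join of the contents of the generators it sends to $1$.
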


\proof
From \cite{S}, we know that the maximal number of such idempotents is the cardinality of $\mathcal{L}$. The rest of the claim is just Lemma \ref{e!=0}, Corollary \ref{idem-e} and Lemma \ref{ortho-e}.
\endproof

\section*{Appendix: Two examples}

We illustrate the above constructions on two examples.

\subsection*{Idempotents for the free left regular band on two generators}

Let $S$ be the left regular band freely generated by two elements $a,b$. Then $S = \{ 1, a, b, ab, ba\}$. All elements of $S$ are idempotent. Also $aba = ab$ and $bab = ba$. The lattice $\mathcal{L}$ has four elements: $\emptyset := S, \mathfrak{a} := Sa, \mathfrak{b} := Sb \textrm{ and }\mathfrak{ab} := Sab = Sba$, where $\emptyset \prec \mathfrak{a} \prec \mathfrak{ab}$ and $\emptyset \prec \mathfrak{b} \prec \mathfrak{ab}$, but $\mathfrak{a}$ and $\mathfrak{b}$ have no relation. 
We begin by computing the elements $P_J$.

$J = \emptyset$:
Neither of the generators satisfies $C(g) \preceq J$, so $T_\emptyset = 1 \in S$. $B_\emptyset = (1-a)(1-b)$. Also 
\begin{align*} B_\emptyset^2 & = (1-a)(1-b)(1-a)(1-b) = (1-a-b+ab)(1-a)(1-b) \\&= (1-a-b+ab)(1-b) = (1-a-b+ab) = B_\emptyset.
\end{align*}
Therefore $A_\emptyset = B_\emptyset = 1-a-b+ab$, so $z_\emptyset = 1-a-b+ab$ is idempotent and $$P_\emptyset = 1-a-b+ab.$$

$J = \mathfrak{a}$:
Then $C(a) \preceq \mathfrak{a}$ and $C(b) \not \preceq \mathfrak{a}$, so $T_\mathfrak{a} = a$ and $B_\mathfrak{a} = 1-b = A_\mathfrak{a}$ since $1-b$ is idempotent. Therefore $z_\mathfrak{a} = (1-b)a = a - ba$. $z_\mathfrak{a}^2 = a-ab$ and one can check that $z_\mathfrak{a}^3 = z_\mathfrak{a}^2$, so $$P_\mathfrak{a} = z_\mathfrak{a}^2 (1+ (1-z_\mathfrak{a}) + (1-z_\mathfrak{a})^2 + \dots ) = z_\mathfrak{a}^2 = a-ab.$$ One can check that $P_\mathfrak{a}$ is idempotent. 

$J = \mathfrak{b}$:
Similarly, $$P_\mathfrak{b} = b- ba.$$

$J = \mathfrak{ab}$: $C(a), C(b) \preceq \mathfrak{ab}$, so $T_\mathfrak{ab} = ab$ and $A_\mathfrak{ab} = 1$. $z_\mathfrak{ab} = ab$ is idempotent, so 
$$P_\mathfrak{ab} = ab.$$

We can now compute the idempotents $e_J$. Since $\mathfrak{ab}$ is maximal, $${e_\mathfrak{ab} = ab}.$$ 
Since $P_\mathfrak{a} e_\mathfrak{ab} = (a-ab)ab = ab - ab = 0$, $${e_\mathfrak{a}} = P_\mathfrak{a} (1- e_\mathfrak{ab}) = P_\mathfrak{a} = {a-ab}$$
and similarly, $${e_\mathfrak{b} = b-ba}.$$
Finally, note that 
$P_\emptyset e_\mathfrak{a}  = (1-a-b+ab)(a-ab)   = 0$ and similarly $P_\emptyset e_\mathfrak{b} = 0$, so that
$${e_\emptyset} = P_\emptyset (1-e_\mathfrak{a}-e_\mathfrak{b}-e_\mathfrak{ab}) = P_\emptyset - P_\emptyset e_\mathfrak{ab} = 1-a-b+ab -ab +ba = {1-a-b+ba}.$$
One can check that $\{ e_\emptyset, e_\mathfrak{a}, e_\mathfrak{b}, e_\mathfrak{ab} \}$ is a collection of mutually orthogonal idempotents.

\subsection*{Idempotents of $H^{\mathfrak{S}_5}(0)$} As mentioned above, $H^{\mathfrak{S}_5}(0)$ has generators $T_1, T_2, T_3, T_4$. In this case, the corresponding lattice $\mathcal{L}$ is the lattice of subsets of $\{ 1,2,3,4\}$. The monoid $H^{\mathfrak{S}_5}(0)$ is actually a $J$-trivial monoid, so we can use the simplified formula from Remark \ref{jtriv}.
We use the shorthand notation $T_{i_1 \cdots i_k}$ to denote the element $T_{i_1} \cdots T_{i_k}$.

If $J = \{1,2,3,4\}$, then $T_J = T_{1234}^\omega = T_{1234123121}$. Also $A_J = 1$, so $z_J = A_J T_J = T_J$.  Also, $P_J = z_J$, and since $J$ is maximal, $e_J = P_J$, so
\[
e_{\{1, 2, 3, 4\}} = T_{1234123121}.
\]

If $J = \{1,2,3\}$, then $T_J = T_{123121}$ and $A_J = 1-T_4$. Then $z_J = (1-T_4) T_{123121} = T_{123121} - T_{4123121}$. One can check that $z_J^2 = z_J$, so $P_J = z_J$. Also, one can check that $P_J$ is orthogonal to $e_{\{1,2,3,4\}}$. So $e_J = P_J$. Therefore
\[
e_{\{1, 2, 3\}} = T_{123121} - T_{4123121}.
\]

Similarly, 
\[
e_{\{2, 3, 4\}} = -T_{1234232} + T_{234232}.
\]

Now let $J = \{ 1,2,4\}$. Then $T_J = T_{1214}$ and $A_J = (1-T_3)$. Letting $z_J = A_JT_J$, one can check that $z_J (1-z_J)^2 = 0$, so $P_J = z_J (1 + (1-z_J))$. Again $P_J$ is orthogonal to $e_{\{1,2,3,4\}}$, so $e_J= P_J$. Therefore
\\

$
e_{\{1, 2, 4\}} = -T_{123423121} + T_{12343121} - T_{34121} + T_{4121}.
$
\\

Similarly, 
\\

$
e_{\{1, 3, 4\}} = -T_{123412321} + T_{12342321} - T_{23431} + T_{3431}.
$
\\

When $J = \{1,2\}$, $T_J = T_{121}$ and $A_J = (1-T_3)(1-T_4)(1-T_3)$. Then $z_J$ is already idempotent, so $P_J = z_J$. One can check that $P_J$ is already orthogonal to $e_{\{1,2,3,4\}}, e_{\{1,2,3\}}, e_{\{1,2,4\}}$. Therefore, 
\\

$
e_{\{1, 2\}} = T_{121} - T_{3121} + T_{34121} - T_{343121} - T_{4121} + T_{43121}.
$
\\

Similarly,
\\

$
e_{\{3, 4\}} = T_{12343} - T_{123431} - T_{2343} + T_{23431} + T_{343} - T_{3431}.
$
\\

If $J = \{1,3\}$, $T_J = T_1T_3$ and $A_J = (1-T_2)(1-T_4)$. One can check that $z_J (1-z_J)^2 = 0$, and $P_J = z_J (1+1-z_J)$ is idempotent. $P_J$ is orthogonal to $e_{\{1,2,3,4\}}$ and $e_{\{1,2,3\}}$, but not orthogonal to $e_{\{1,2,4\}}$. So we define $e_{\{1,3\}}= P_{\{1,3\}} (1-e_{\{1,2,4\}})$. Then 
\\

$
e_{\{1, 3\}} = -T_{123121} + T_{12321} - T_{12341231} + T_{123412321} + T_{1234231} - T_{12342321} - T_{231} + T_{2341231} - T_{23412321} + T_{31} - T_{341231} + T_{3412321} + T_{4123121} - T_{412321} + T_{4231} - T_{431}.
$
\\

Similarly, 
\\

$
e_{\{2, 4\}} = -T_{12342312} + T_{123423121} + T_{1234232} + T_{1234312} - T_{12343121} - T_{123432} + T_{2342312} - T_{23423121} - T_{234232} - T_{234312} + T_{2343121} + T_{23432} + T_{3412} - T_{342} - T_{412} + T_{42}.
$
\\

We continue in this way, constructing all of the idempotents for the algebra. For the sake of completeness, the other idempotents are:
\\

$
e_{\{2, 3\}} = -T_{1232} + T_{123412312} - T_{1234123121} + T_{232} - T_{23412312} + T_{234123121} + T_{41232} - T_{4232};
$
\\

$
e_{\{1, 4\}} = -T_{1234123121} + T_{123412321} + T_{123423121} - T_{12342321} - T_{12343121} + T_{1234321} + T_{2341} - T_{23421} - T_{341} + T_{3421} + T_{41} - T_{421};
$
\\

$
e_{\{4\}} = -T_{1234} + T_{12341} - T_{123412} + T_{1234121} + T_{12342} - T_{123421} + T_{234} - T_{2341} + T_{23412} - T_{234121} - T_{2342} + T_{23421} - T_{34} + T_{341} - T_{3412} + T_{34121} + T_{342} - T_{3421} + T_{4} - T_{41} + T_{412} - T_{4121} - T_{42} + T_{421};
$
\\

$
e_{\{3\}} = T_{123} - T_{1231} + T_{1234123} - T_{12341232} - T_{123423} + T_{1234232} - T_{23} + T_{231} - T_{234123} + T_{2341232} + T_{23423} - T_{234232} + T_{3} - T_{31} + T_{34123} - T_{341232} - T_{3423} + T_{34232} - T_{4123} + T_{41231} + T_{423} - T_{4231} - T_{43} + T_{431};
$
\\

$
e_{\{2\}} = -T_{12} + T_{12312} - T_{123121} + T_{2} - T_{2312} + T_{23121} + T_{312} - T_{32} - T_{3412} + T_{3412312} - T_{34123121} + T_{342} - T_{342312} + T_{3423121} + T_{34312} - T_{3432} + T_{412} - T_{412312} + T_{4123121} - T_{42} + T_{42312} - T_{423121} - T_{4312} + T_{432};
$
\\

$
e_{\{1\}} = T_{1} - T_{21} + T_{231} - T_{2321} - T_{2341} + T_{23421} - T_{234231} + T_{2342321} + T_{23431} - T_{234321} - T_{31} + T_{321} + T_{341} - T_{3421} + T_{34231} - T_{342321} - T_{3431} + T_{34321} - T_{41} + T_{421} - T_{4231} + T_{42321} + T_{431} - T_{4321}.
$
\\

Finally, $e_{\{\}}$ is just the signed sum of all elements, with sign determined by Coxeter length:
\\

$
e_{\{\}} = \sum_w (-1)^{\ell(w)} T_w.
$
\\

One can check (ideally not by hand!) that $\{ e_J : J \subseteq \{1,2,3,4 \}\}$ is a complete system of orthogonal idempotents.

\section*{Acknowledgements}
The authors are grateful to Tom Denton, Florent Hivert, Anne Schilling, Benjamin
Steinberg and Nicolas M. Thi\'ery for useful and open mathematical discussions. We
first learned of the equivalence between $R$-trivial monoids and weakly ordered
monoids from Thi\'ery after discussions between the authors and Denton, Hivert,
Schilling, and Thi\'ery. The proof presented in Section \ref{rtriv} was outlined
by Steinberg.

This research was facilitated by computer exploration using the open-source
mathematical software \texttt{Sage}~\cite{sage} and its algebraic
combinatorics features developed by the \texttt{Sage-Combinat}
community~\cite{Sage-Combinat}. We are especially grateful to Nicolas M. Thi\'ery and Florent Hivert for sharing their code with us.

This work is supported in part by CRC and NSERC.
It is the result of a working session at the Algebraic
Combinatorics Seminar at the Fields Institute with the active
participation of C.~Benedetti, A.~Bergeron-Brlek, Z.~Chen, H.~Heglin, D.~Mazur and M.~Zabrocki.

\small
\bibliography{idempotents}{}
\bibliographystyle{plain}
\end{document}